\newcommand{\ud}{\mathrm{d}}
\newcommand{\ue}{\mathrm{e}}
\newcommand{\supp}{\mathrm{supp}}
\DeclareMathOperator{\tr}{tr}
\DeclareMathOperator{\discr}{discr}
\newcommand{\Id}{\operatorname{Id}}
\newtheorem{tw}{Theorem}
\newtheorem{wn}{Corollary}
\newtheorem{hipoteza}{Conjecture}
\theoremstyle{remark}
\newcommand{\calV}{\mathcal{V}}
\newcommand{\NN}{\mathbb{N}}
\newcommand{\RR}{\mathbb{R}}
\theoremstyle{remark}
\newtheorem{przyklad}{Example}
\author{Grzegorz Świderski}
\address{
	Grzegorz Świderski\\
	Instytut Matematyczny\\
	Uniwersytet Wrocławski\\
	Pl. Grunwaldzki 2/4\\
	50-384 Wrocław\\
	Poland}
\email{gswider@math.uni.wroc.pl}
\title[Periodic perturbations of unbounded Jacobi matrices II]{Periodic perturbations of unbounded Jacobi matrices II: Formulas for density}
\keywords{Continuous positive density, Turán determinants, Christoffel functions, asymptotics of orthonormal polynomials.}
\subjclass[2010]{Primary: 42C05.}
\begin{document}
   \begin{abstract}
      We give formulas for the density of the measure of orthogonality
      for orthonormal polynomials with unbounded recurrence coefficients.
      The formulas involve limits of appropriately scaled Turán determinants
      or Christoffel functions. Exact asymptotics of the polynomials and 
      numerical examples are also provided.
   \end{abstract}   
   
   \maketitle
   
   \section{Introduction}
      Consider a sequence $(p_n : n \in \NN)$ of polynomials defined by
      \begin{equation}
         \begin{gathered} \label{eq:definicjaWielomianow}
            p_{-1}(x) = 0, \quad p_0(x) = 1, \\
            a_{n-1} p_{n-1}(x) + b_n p_n(x) + a_n p_{n+1}(x) = x p_n(x) \quad (n \geq 0)
         \end{gathered}
      \end{equation}
      for sequences $a = (a_n \colon n \in \mathbb{N})$ and $b = (b_n \colon n \in \mathbb{N})$ satisfying $a_n > 0$ and $b_n \in \RR$. The sequence \eqref{eq:definicjaWielomianow} is orthonormal in $L^2(\mu)$ for a Borel measure $\mu$ on the real line. We are interested in the case when the sequence $a$ is unbounded and the measure $\mu$ is unique. When it holds, we want to find conditions on the sequences $a$ and $b$ assuring absolute continuity of $\mu$ and a constructive formula for its density.
      
      In the case when the sequences $a$ and $b$ are bounded, there are several approaches to an approximation of the density of $\mu$. One is obtained by means of $N$-\emph{shifted Turán determinants}, i.e. expressions of the form 
      \[
         D^N_n(x) = p_n(x) p_{n+N-1}(x) - p_{n-1}(x) p_{n+N}(x)
      \]
      for positive $N$ (see \cite{Nevai79, JGVA, WVA}). Another by \emph{Christoffel functions}, i.e.
      \[
         \lambda_n(x) = \bigg[ \sum_{k=0}^{n} p_k^2(x) \bigg]^{-1}
      \]
      (see \cite{MNT91, VT2009}).
      
      In the unbounded case there is a vast literature concerning qualitative properties of $\mu$ such as: its support, absolute continuity or continuity of $\mu$, localization of its discrete part, see, e.g. \cite{JN3, JD4, DJMP, JM4, JN2, JN1, GS1}. As far as the approximation of $\mu$ is concerned, the only result known to the author is \cite{AptekarevGeronimo2016}. In Section \ref{sec:TuranAndTheMeasure} we prove the following theorem.
      
      \begin{tw}[Regular case] \label{tw:przypadekRegularny}
         Let $N$ be a positive integer. Suppose that
         \begin{equation} \label{eq:7}
            \calV_N\bigg(\frac{1}{a_n} : n\in \NN\bigg) + \calV_N\bigg(\frac{b_n}{a_n} : n \in \NN\bigg) + \calV_1\bigg(\frac{a_{n+N}}{a_n} : n \in \NN\bigg) < \infty.
         \end{equation}
         Let
         \begin{enumerate}[(a)]
            \item 
            $\begin{aligned}
               \sum_{n=0}^\infty \frac{1}{a_n} = \infty,
            \end{aligned}$ \label{tw:1:1}
            
            \item
            $\begin{aligned}
               \lim_{n \rightarrow \infty} \frac{1}{a_n} = 0, \quad \lim_{n \rightarrow \infty} \frac{b_{nN+i}}{a_{nN+i}} = q_i, \quad \lim_{n \rightarrow \infty} \frac{a_{nN+i-1}}{a_{nN+i}} = r_i > 0,
            \end{aligned}$ \label{tw:1:2}
         \end{enumerate}
         and
         \begin{equation} \label{eq:regWarNaDiscr}
            \mathcal{F} = \prod_{j = 0}^{N-1} 
            \begin{pmatrix}
               0 & 1 \\
               -r_j & -q_j
            \end{pmatrix}, \quad \discr{\mathcal{F}} < 0.
         \end{equation}
         Then for each $x \in \RR$ the following limit 
         \begin{equation} \label{eq:zbieznoscTurana}
            g(x) = \lim_{n \rightarrow \infty} a_{n+N-1} D^N_n(x)
         \end{equation}
         exists and defines a continuous function without zeros. Moreover, the measure $\mu$ is absolutely continuous on $\RR$ and its density can be expressed as follows
         \[
            \mu'(x) = \frac{\sqrt{-\discr{\mathcal{F}}}}{2 \pi |g(x)|}, \quad x \in \RR.
         \]
      \end{tw}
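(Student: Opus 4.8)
The plan is to rewrite the recurrence \eqref{eq:definicjaWielomianow} in transfer-matrix form, $\mathbf{p}_{n+1}(x) = B_n(x)\,\mathbf{p}_n(x)$ with $\mathbf{p}_n(x) = \bigl(p_{n-1}(x),\,p_n(x)\bigr)^{\mathrm T}$ and
\[
   B_n(x) = \begin{pmatrix} 0 & 1 \\[2pt] -\dfrac{a_{n-1}}{a_n} & \dfrac{x-b_n}{a_n} \end{pmatrix},
\]
to set $\mathcal{X}_n(x) = B_{n+N-1}(x)\cdots B_n(x)$, so that $\mathbf{p}_{n+N}(x) = \mathcal{X}_n(x)\,\mathbf{p}_n(x)$ and $\det\mathcal{X}_n(x) = a_{n-1}/a_{n+N-1}$, and then to observe that the shifted Turán determinant is the relative invariant of $\mathcal{X}_n$. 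For a $2\times2$ matrix $M$ put $q_M(\mathbf{v}) = \det(M\mathbf{v}\mid\mathbf{v})$; then $q_M\bigl((u,v)^{\mathrm T}\bigr) = M_{12}v^2 - M_{21}u^2 + (M_{11}-M_{22})uv$, whence
\[
   D^N_n(x) = q_{\mathcal{X}_n(x)}\bigl(\mathbf{p}_n(x)\bigr).
\]
I would record the identities $q_M(M\mathbf{v}) = (\det M)\,q_M(\mathbf{v})$ and $q_{BMB^{-1}}(B\mathbf{v}) = (\det B)\,q_M(\mathbf{v})$, and the fact that the symmetric matrix of $q_M$ is definite precisely when $\discr M < 0$. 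Because $x/a_n\to0$, condition \eqref{tw:1:2} gives $B_{nN+i}(x)\to B_i := \bigl(\begin{smallmatrix}0&1\\-r_i&-q_i\end{smallmatrix}\bigr)$, hence $\mathcal{X}_{nN+i}(x)\to\mathcal{X}^{(i)}$, an $x$-independent matrix; the matrices $\mathcal{X}^{(0)},\dots,\mathcal{X}^{(N-1)}$ are cyclic conjugates of one another and of $\mathcal{F}$, so each has discriminant $\discr\mathcal{F}<0$, i.e.\ each $q_{\mathcal{X}^{(i)}}$ is a definite form.

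\textbf{The limit $g$.} Here I would invoke the asymptotics of the generalised eigenvectors from Part~I: under the present hypotheses the vectors $\sqrt{a_{nN+i}}\;\mathbf{p}_{nN+i}(x)$ converge, locally uniformly in $x\in\RR$, to a non-degenerate orbit of an elliptic rotation; in particular $a_{n+N-1}\|\mathbf{p}_n(x)\|^2$ is bounded above and below, locally uniformly, and $a_{nN+i}\,q_{\mathcal{X}^{(i)}}\bigl(\mathbf{p}_{nN+i}(x)\bigr)$ converges locally uniformly. Condition \eqref{eq:7} is arranged so that $\sum_n\|\mathcal{X}_{n+N}(x)-\mathcal{X}_n(x)\|<\infty$ locally uniformly and so that $a_{n+N}/a_n$ has bounded variation. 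Writing $c_n(x) = a_{n+N-1}D^N_n(x)$ and combining $D^N_{n+N} = q_{\mathcal{X}_{n+N}}(\mathcal{X}_n\mathbf{p}_n)$ with $q_{\mathcal{X}_n}(\mathcal{X}_n\mathbf{p}_n) = (\det\mathcal{X}_n)D^N_n = \tfrac{a_{n-1}}{a_{n+N-1}}D^N_n$ gives a recursion
\[
   c_{n+N}(x) = \frac{a_{n+2N-1}\,a_{n-1}}{a_{n+N-1}^2}\,c_n(x) + \varepsilon_n(x),
\]
where the coefficient converges to $1$ summably fast, and $|\varepsilon_n(x)| \le C\,\|\mathcal{X}_{n+N}(x)-\mathcal{X}_n(x)\|$ by the uniform bound on $a_{n+N-1}\|\mathbf{p}_n\|^2$, so $\sum_n|\varepsilon_n(x)|<\infty$ locally uniformly. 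Since $c_n$ is bounded, this forces $c_n(x)$ to converge along each residue class; the telescoping $\prod_{j=0}^{N-1}r_j = \lim_m a_{mN-1}/a_{mN+N-1}$ accounts for the scaling $a_{n+N-1}$, and a one-step bookkeeping using $\det B_n = a_{n-1}/a_n$ (together with $(a_{n+N}/a_{n+N-1})r_i\to1$ for $n\equiv i$) shows the limits coincide across residue classes. Thus $g(x) = \lim_n a_{n+N-1}D^N_n(x)$ exists; it is continuous since the convergence is locally uniform and each $a_{n+N-1}D^N_n$ is a polynomial; and $g(x)\neq0$ because $\mathbf{p}_n(x)\neq(0,0)^{\mathrm T}$ for every $n$ and $x$ (consecutive orthonormal polynomials have no common zero), $q_{\mathcal{X}^{(i)}}$ is definite, and the normalised vectors stay bounded away from $0$.

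\textbf{The density.} That $\mu$ is purely absolutely continuous on $\RR$ with an everywhere positive density is again part of Part~I — the standard consequence, via subordinacy/Gilbert--Pearson theory, of the one-period transfer matrices converging to the elliptic $\mathcal{F}$ with the summable corrections from \eqref{eq:7}. Part~I also expresses $\mu'(x)$ through the same normalised amplitude $A_i(x)$ of $\mathbf{p}_{nN+i}(x)$ that controls $g(x)$: up to a constant depending only on $\mathcal{F}$ one has $g(x) \propto A_i(x)^2$ (since $q_{\mathcal{X}^{(i)}}$ is constant on the $q_{\mathcal{X}^{(i)}}$-unit vectors), while the a.c.\ density is, up to the constant built from the rotation angle, proportional to $A_i(x)^{-2}$; eliminating $A_i$ gives $\mu'(x) = c/|g(x)|$ with $c$ depending only on $\mathcal{F}$. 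To pin down $c$, write the eigenvalues of $\mathcal{F}$ as $\sqrt{\det\mathcal{F}}\,\ue^{\pm\mathrm{i}\theta}$ with $\theta\in(0,\pi)$, so that $\sqrt{-\discr\mathcal{F}} = 2\sqrt{\det\mathcal{F}}\,\sin\theta$; since the rotation number per period is the $x$-independent number $\theta$, the density-of-states computation — or, as a cross-check, evaluation on an explicitly solvable model such as a suitably rescaled $N$-periodic Jacobi matrix — gives $c = \sqrt{-\discr\mathcal{F}}/(2\pi)$, the asserted value.

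\textbf{Where the difficulty lies.} The genuinely hard part is the input taken from Part~I: the locally uniform asymptotic analysis of $\mathbf{p}_n(x)$ under only \emph{$N$-step bounded variation} of the coefficients, i.e.\ a discrete Levinson/Kooman problem in the critical regime where the two eigenvalues of the limiting transfer matrix have equal modulus, so that the usual exponential dichotomy is unavailable and one must diagonalise toward the rotation while controlling the error in $\ell^1$; establishing absolute continuity of $\mu$ and the precise constant in the density formula belongs to the same package. Granting Part~I, the remaining work — the Turán-form identity, the bookkeeping that matches the factor $a_{n+N-1}$ with $\det\mathcal{X}_n = a_{n-1}/a_{n+N-1}$, and the identification of the constant — is essentially routine, the one point needing care being to keep every limit locally uniform in $x$ so that it transfers to $g$ and to $\mu'$.
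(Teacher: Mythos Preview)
Your treatment of the limit $g(x)=\lim_n a_{n+N-1}D^N_n(x)$ is in line with the paper: both defer this to \cite{GSBT} (``Part~I''), and your transfer-matrix/quadratic-form bookkeeping is a correct paraphrase of that material. The gap is in the identification of $\mu'$. You assert that Part~I already supplies a relation $\mu'(x)\propto A_i(x)^{-2}$ in terms of the asymptotic amplitude of $\mathbf p_n$, so that eliminating $A_i$ against $|g(x)|\propto A_i(x)^2$ and fixing one scalar via a rotation-number or model computation finishes the proof. But Part~I contains no density formula: it provides the Tur\'an convergence and (under extra hypotheses, its Corollary~1) the asymptotic $a_n\bigl(p_{n-1}^2+p_n^2\bigr)\to|g|$, while the bridge from $|g|$ to $\mu'$ is exactly the contribution of the present paper. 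Your ``density'' paragraph therefore invokes an input that is not available; the Carmona-type identity and the constant you propose to pin down are themselves the theorem to be proved, not ingredients you may cite.

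The paper's argument is quite different and bypasses any amplitude--to--density identity. One truncates $(a,b)$ at level $K$ and extends $N$-periodically to bounded sequences $(a^K,b^K)$ with orthogonality measure $\mu_K$; the Geronimo--Van~Assche theorem (the bounded, asymptotically $N$-periodic case) then gives $\mu_K'(x)=\sqrt{-\discr\mathcal F_K(x)}\big/\bigl(2\pi|g_K(x)|\bigr)$ on the set $E_K$. Hypothesis~(a) is Carleman's condition, so the moment problem is determinate and $\mu_K\to\mu$ weak-*. Using the estimates already established in \cite{GSBT} one shows $\discr\mathcal F_K(x)\to\discr\mathcal F$ and $g_K(x)\to g(x)$ uniformly on each compact interval $I\subset\RR$; for large $K$ one has $I\subset E_K$, so $\mu_K(I)=\int_I\mu_K'$, and uniform convergence of the densities together with weak-* convergence forces $\mu(I)=\int_I\sqrt{-\discr\mathcal F}\big/\bigl(2\pi|g|\bigr)$. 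The constant $\sqrt{-\discr\mathcal F}/(2\pi)$ is thus inherited wholesale from the bounded-coefficient theorem rather than extracted from a rotation-number computation, and absolute continuity of $\mu$ falls out of the same limiting argument rather than from subordinacy theory.
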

      
      Recall that for a matrix $X \in M_2(\mathbb{R})$ its discriminant is defined by $\discr{X} = (\tr{X})^2 - 4 \det{X}$. 
      Let us also recall that the total $N$-variation of a sequence $(x_n : n \in \NN)$ is defined by
      \[
         \calV_N(x_n : n\in \NN) = \sum_{n = 0}^\infty |x_{n+N} - x_n|.
      \]
      Notice that $\calV_N(x_n : n \in \NN) < \infty$ implies that for each $j \in \{0, \ldots, N-1\}$ the subsequence $(x_{k N + j} : k \in \NN)$ converges.
      
      The assumptions of \cite{AptekarevGeronimo2016} correspond to the case $N=1$ of Theorem~\ref{tw:przypadekRegularny}.
      Let us comment on some of the differences between Theorem~\ref{tw:przypadekRegularny} and the results from \cite{AptekarevGeronimo2016}. Our expression for $\mu'$ is different, namely we have characterisation in terms of Turán determinats instead of an infinite product. Furthermore, we do not use complex analysis in our proofs. But more importantly, one can take $N>1$, which allows to cover many perturbations considered in the literature (see \cite[Section 4.3]{GSBT} for details).
      
      The next theorem concerns a critical case of the previous one, i.e. when $\discr{\mathcal{F}} = 0$. This case is particularly interesting in applications (e.g. in the so-called one-quarter class of orthogonal polynomials, see \cite[Section 5]{GS1}).
      
      \begin{tw}[Critical case] \label{tw:przypadekKrytyczny}
         Let $N$ be a positive integer and let
         \[
            q \in \bigg\{2 \cos \frac{\pi}{N}, 2 \cos \frac{2\pi}{N}, \ldots, 2 \cos\frac{(N-1)\pi}{N} \bigg\}.
         \]
         Suppose that
         \[
            \calV_N\left( \frac{1}{a_n} : n \in \NN\right) + \calV_N\left( a_n - a_{n-1} : n \in \NN \right) + \calV_N\big(b_n - q a_n : n \in \NN\big) < \infty.\\
         \]
         Let
         \begin{enumerate}[(a)]
            \item
            $\begin{aligned} 
               \lim_{n \rightarrow \infty} \frac{1}{a_n} = 0, \quad \lim_{k \rightarrow \infty} (a_{N k + i} - a_{N k + i - 1}) = s_i, \quad \lim_{n \rightarrow \infty} (b_n - q a_n) = 0,
            \end{aligned}$ \label{tw:2:1}

            \item
            $\begin{aligned} 
               \lim_{n \rightarrow \infty} (a_{n+N} - a_n) = 0.
            \end{aligned}$ \label{tw:2:2}           
         \end{enumerate}
         Let $x_- \leq x_+$ be the roots of the following polynomial
         \[
            h(x) = x^2 \frac{4 N^2}{4 - q^2} - 4 \sum_{i,j=1}^{N-1} s_i s_j w_{i-1}(q) w_{j-1}(q) w_{i-j}(q),
         \]
         where $w_n$ is the sequence of Chebyshev polynomials of the second kind defined in \eqref{eq:defCzebyszewII}. Then for each $x \in \RR \backslash [x_-,x_+]$ the limit
         \[
            \widetilde{g}(x) = \lim_{n \rightarrow \infty} a^2_{n+N-1} D^N_n(x)
         \]
         exists and defines a continuous function without zeros. Moreover, the measure $\mu$ is absolutely continuous on $\RR \backslash [x_-,x_+]$ and its density can be expressed as follows
         \[
            \mu'(x) = \frac{\sqrt{h(x)}}{2 \pi |\widetilde{g}(x)|}, \quad x \in \RR \backslash [x_-,x_+].
         \]
      \end{tw}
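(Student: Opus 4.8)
The plan is to work in the transfer-matrix picture and run an analysis parallel to the one behind Theorem~\ref{tw:przypadekRegularny}, but now about a \emph{degenerate} (parabolic) limiting matrix; the new feature is that the nontrivial spectral data is carried not by the limiting $N$-step transfer matrix itself, but by its first-order deviation. Write $\vec p_n(x)=\bigl(p_{n-1}(x),\,p_n(x)\bigr)^{\mathsf{T}}$ and
\[
   B_n(x)=\begin{pmatrix}0 & 1\\ -a_{n-1}/a_n & (x-b_n)/a_n\end{pmatrix},\qquad
   X_n(x)=B_{n+N-1}(x)\cdots B_n(x),
\]
so that $\vec p_{n+N}(x)=X_n(x)\vec p_n(x)$ and $\det X_n(x)=a_{n-1}/a_{n+N-1}$. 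Reading off $p_{n+N-1},p_{n+N}$ from the rows of $X_n(x)$ gives the identity
\[
   D^N_n(x)=\bigl\langle E_n(x)\vec p_n(x),\,\vec p_n(x)\bigr\rangle,\qquad
   E_n(x)=\tfrac{1}{2}\bigl(JX_n(x)-X_n(x)^{\mathsf{T}}J\bigr),\quad J=\begin{pmatrix}0 & -1\\ 1 & 0\end{pmatrix},
\]
so that $\langle E_n(x)\vec v,\vec v\rangle=-\det\bigl(\vec v,\,X_n(x)\vec v\bigr)$ and $\det E_n(x)=-\tfrac{1}{4}\discr X_n(x)$. Under (a)--(b), $B_n(x)\to\mathcal{B}:=\left(\begin{smallmatrix}0 & 1\\ -1 & -q\end{smallmatrix}\right)$, whence $X_n(x)\to\mathcal{B}^N$; since $q=2\cos(k\pi/N)$ forces the eigenvalue angle of $\mathcal{B}$ to be $(N-k)\pi/N$, we get $\mathcal{B}^N=(-1)^{N-k}\Id=:\varepsilon\Id$, so $\mathcal{F}=\varepsilon\Id$ and $\discr\mathcal{F}=0$ — the degeneracy that makes the scaling $a_{n+N-1}D^N_n$ of Theorem~\ref{tw:przypadekRegularny} inadequate.

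The central step is to resolve this degeneracy. Put $\widehat X_n(x)=(a_{n+N-1}/a_{n-1})^{1/2}X_n(x)\in\mathrm{SL}_2(\RR)$ (still $\to\varepsilon\Id$) and $M_n(x)=a_{n+N-1}\bigl(\widehat X_n(x)-\varepsilon\Id\bigr)$. Writing $B_{n+j}(x)=\mathcal{B}+C_{n+j}(x)$ with $C_{n+j}(x)=O(1/a_{n+j})$ and expanding the product, the first-order part of $\tr X_n(x)$ involves the traces $\tr\bigl(C_{n+j}(x)\,\mathcal{B}^{N-1}\bigr)$; writing powers of $\mathcal{B}$ through the Chebyshev polynomials $w_m$ and using $2\cos\bigl((N-k)\pi/N\bigr)=-q$ one gets the identities $(\mathcal{B}^{N-1})_{22}=0$ and $(\mathcal{B}^{N-1})_{12}=-\varepsilon$, which force the $O(1/a_n)$ part of $\tr\widehat X_n(x)-2\varepsilon$ to vanish, so $M_n(x)$ has trace $o(1)$; assumption (b), equivalently $\sum_i s_i=0$, removes the residual sub-principal drift. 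Pushing the expansion of $X_n(x)$ to second order in the small quantities $1/a_{n+j}$, $a_{n+j}-a_{n+j-1}$, $b_{n+j}-qa_{n+j}$ — the total $N$-variation hypotheses being exactly what makes the relevant remainder sums absolutely convergent — and evaluating the second-order traces again via the $w_m$, one obtains, locally uniformly in $x$, that $M_n(x)\to M(x)$ with $M(x)$ \emph{traceless} and
\[
   \discr M(x)=-4\det M(x)=-h(x),
\]
the $x^2$-coefficient of $h$ emerging as $4N^2/(4-q^2)$ and the $x$-independent term as $4\sum_{i,j=1}^{N-1}s_is_j\,w_{i-1}(q)\,w_{j-1}(q)\,w_{i-j}(q)$. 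This is the main obstacle. Since $\discr X_n(x)=(a_{n-1}/a_{n+N-1})\discr\widehat X_n(x)$ and $a^2_{n+N-1}\discr\widehat X_n(x)\to\discr M(x)$, we conclude
\[
   \lim_{n\to\infty}a^2_{n+N-1}\,\discr X_n(x)=-h(x),\qquad x\in\RR.
\]

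On $\RR\setminus[x_-,x_+]$ we have $h(x)>0$, i.e.\ $\discr M(x)<0$: the effective matrix $M(x)$ is elliptic with distinct eigenvectors, so the eigenvectors of $\widehat X_n(x)$ converge, its eigenvalues lying on the unit circle at an angle tending to $0$ like $1/a_{n+N-1}$. By the diagonalization/Levinson-type machinery for Jacobi matrices with slowly varying coefficients (the $\calV_N$ and $1/a_n\to0$ hypotheses controlling the perturbations, the vanishing rotation angle affecting only accumulated phases), one obtains, locally uniformly on $\RR\setminus[x_-,x_+]$, that $\vec p_n(x)$ lies asymptotically on an $\widehat X_n(x)$-invariant ellipse of area of order $A(x)^2/a_n$ for some continuous $A(x)>0$; the same analysis also gives the absolute continuity of $\mu$ and $\mu'(x)>0$ there. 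Along such an ellipse the function $\vec v\mapsto\langle E_n(x)\vec v,\vec v\rangle=-\det\bigl(\vec v,\,X_n(x)\vec v\bigr)$ is constant, of modulus proportional to $A(x)^2\sqrt{-\discr X_n(x)}\,(1+o(1))$ and of fixed sign for large $n$; hence $a^2_{n+N-1}D^N_n(x)$ converges to a continuous limit $\widetilde g(x)$ with $|\widetilde g(x)|$ proportional to $A(x)^2\sqrt{h(x)}$, which in particular is nonzero because $A(x)\neq0$ and $h(x)>0$.

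Finally, the density formula follows by eliminating the amplitude $A(x)$ between the Tur\'an-determinant asymptotics and the Christoffel-function (M\'at\'e--Nevai--Totik) asymptotics, exactly as in the bounded-coefficient theory. These Christoffel-function asymptotics — available on $\RR\setminus[x_-,x_+]$ precisely because $\widehat X_n(x)$ is asymptotically elliptic, and themselves yielding the absolute continuity and positivity of $\mu'$ used above — show that $A(x)^2$ is proportional to $\mu'(x)^{-1}$; hence $|\widetilde g(x)|\,\mu'(x)$ is proportional to $\sqrt{h(x)}$, and comparison with the formula $\mu'=\sqrt{-\discr\mathcal{F}}/(2\pi|g|)$ of Theorem~\ref{tw:przypadekRegularny} — which is this same relation with $\discr\mathcal{F}$ in place of the rescaled critical discriminant $\lim_{n\to\infty}a^2_{n+N-1}\discr X_n(x)=-h(x)$ — fixes the proportionality constant, yielding
\[
   \mu'(x)=\frac{\sqrt{h(x)}}{2\pi\,|\widetilde g(x)|},\qquad x\in\RR\setminus[x_-,x_+].
\]
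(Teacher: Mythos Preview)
Your first half is on target: recognising $X_n(x)\to\varepsilon\Id$, rescaling the deviation as $M_n(x)=a_{n+N-1}(\widehat X_n(x)-\varepsilon\Id)$, and identifying $-\discr M(x)$ with $h(x)$ is exactly the structural picture behind the companion paper \cite{GSBT} (formulas (43), (49), (53) there); and the convergence of $a_{n+N-1}^2 D^N_n(x)$ to a continuous nonzero $\widetilde g(x)$ on $\RR\setminus[x_-,x_+]$ can indeed be proved along those lines (this is \cite[Corollary~3]{GSBT}).

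The gap is in the second half, where you pass from $\widetilde g(x)$ to $\mu'(x)$. You invoke ``Christoffel-function (M\'at\'e--Nevai--Totik) asymptotics'' to obtain $A(x)^2\propto 1/\mu'(x)$, and then ``compare with Theorem~\ref{tw:przypadekRegularny}'' to fix the constant. Neither step is available to you. In this unbounded critical regime no independent MNT-type theorem is at hand; in the present paper the Christoffel-function asymptotics (Corollary~\ref{wn:zbieznoscChristoffel}) are \emph{deduced from} Theorem~\ref{tw:przypadekKrytyczny}, and even then only under the extra restriction $s_i\equiv 0$, so invoking them here is circular. And Theorem~\ref{tw:przypadekRegularny} simply does not apply when $\discr\mathcal F=0$, so there is no common regime on which to match constants; the $1/(2\pi)$ cannot be read off that way. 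The paper closes this gap by a genuinely different device: it truncates $(a_n),(b_n)$ at level $K$ and extends them $N$-periodically to obtain \emph{bounded} Jacobi data, applies the classical Geronimo--Van Assche theorem (Theorem~\ref{tw:Geronimo-VanAssche}) to get $\mu_K'(x)=\sqrt{-\discr\mathcal F_K(x)}/(2\pi|g_K(x)|)$ with the constant already in place, then shows $a_{K+N-1}^2\discr\mathcal F_K(x)\to -h(x)$ and $a_{K+N-1}\,g_K(x)\to\widetilde g(x)$ uniformly on compacta and passes to the limit via the weak-$*$ convergence $\mu_K\to\mu$ (Carleman plus matching moments). To repair your argument you would need either an independent derivation of the Christoffel asymptotics in the unbounded critical case, or this periodic-truncation bridge to the bounded theory.
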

      
      There are known examples when in Theorem~\ref{tw:przypadekKrytyczny} the set of the limit points of the support of $\mu$ equals $\RR \backslash (x_-,x_+)$ (see \cite{DJMP}).
      
      The article is organised as follows. In Section~\ref{sec:Preliminaries} we present our notation and basic formulas. 
      In Section~\ref{sec:TuranAndTheMeasure} we show proofs of Theorem~\ref{tw:przypadekRegularny} and Theorem~\ref{tw:przypadekKrytyczny}. The proofs are based on the idea from \cite{AptekarevGeronimo2016}. Namely, we take a sequence of approximations of $\mu$ obtained from polynomials $(p_n : n \in \NN)$ with eventually $N$-periodic sequences $a$ and $b$ (i.e. the sequences $(a_n : n \geq K)$ and $(b_n : n \geq K)$ for a number $K$ are $N$-periodic). The approximating measures can be obtained from classical theorems concerning the bounded case. Then, by detailed analysis of the proofs from \cite{GSBT} implying convergence of Turán determinants, we show that the densities of the approximations converge almost uniformly\footnote{A sequence converges almost uniformly on $X$ if it converges uniformly on every compact subset of $X$.} to the density of $\mu$.
      
      In Section~\ref{sec:convergenceChristoffel}, under additional assumptions, we show exact asymptotics of the sequence $\left(p^2_n(x) + p^2_{n+1}(x) : n \in \NN \right)$ and the convergence to $\mu'$ of appropriately scaled Christoffel functions. Let us note that the results from Section~\ref{sec:convergenceChristoffel} are mainly applicable to the case of the symmetric measure $\mu$, i.e. $b_n \equiv 0$. Additional assumptions are as follows: $r_i \equiv 1$ in the setting of Theorem~\ref{tw:przypadekRegularny} and $s_i \equiv 0$ in the setting of Theorem~\ref{tw:przypadekKrytyczny}.
      
      Finally, in Section~\ref{sec:Examples}, we check the numerical approximation by Turán determinants and we give applications to asymptotics of the sequence $\left(p^2_n(x) + p^2_{n+1}(x) : n \in \NN \right)$.
      
   \section{Preliminaries} \label{sec:Preliminaries}
      For any two sequences $(x_n : n \in \NN)$ and $(y_n : n \in \NN)$ one has
      \begin{equation}
         \label{eq:29}
         \calV_N(x_n y_n : n \in \NN) \leq \sup_{n \in \NN}{|x_n|}\ \calV_N(y_n : n \in \NN) +
         \sup_{n \in \NN}{|y_n|}\ \calV_N(x_n : n \in \NN).
      \end{equation}
      
      Observe that for every matrix $X \in M_2(\RR)$ of the form
      \[
         X = c \Id + C,
      \]
      where $c$ is a number, the following formula holds true
      \begin{equation} \label{eq:4}
         \discr(X) = \discr(C).
      \end{equation}
      For a sequence of square matrices $(C_n : n \in \NN)$ and $n_0, n_1 \in \NN$ we set
      \[
         \prod_{k=n_0}^{n_1} C_k = 
          \begin{cases} 
          C_{n_1} C_{n_1 - 1} \cdots C_{n_0} & n_1 \geq n_0, \\
          \Id & \text{otherwise.}
          \end{cases}
      \]
      
      Given sequences $a$ and $b$, we associate the sequence of polynomials $(p_n : n \in \NN)$ by 
      the formula \eqref{eq:definicjaWielomianow}. Moreover, for each $x \in \RR$ and $n \in \NN$
      we define the \emph{transfer matrix} $B_n(x)$ by
      \[
         B_n(x) = 
         \begin{pmatrix}
         0 & 1 \\
         -\frac{a_{n-1}}{a_n} & \frac{x - b_n}{a_n}
         \end{pmatrix}.
      \]      
      Then we have
      \begin{equation} \label{eq:1}
          \begin{pmatrix}
              p_n(x) \\
              p_{n+1}(x)	
         \end{pmatrix}
          =
          B_n(x)
          \begin{pmatrix}
              p_{n-1}(x) \\
              p_n(x)
          \end{pmatrix}.
      \end{equation}
      Define \emph{$N$-shifted Turán determinants} by the formula
      \[
         D^N_n(x) = p_n(x) p_{n+N-1}(x) - p_{n-1}(x) p_{n+N}(x).
      \]
      By \eqref{eq:1} we have
      \begin{equation} \label{eq:2}
         D^N_n(x) = \bigg\langle E X_n(x)          
            \begin{pmatrix}
              p_{n-1}(x) \\
              p_n(x)
            \end{pmatrix},
            \begin{pmatrix}
              p_{n-1}(x) \\
              p_n(x)
            \end{pmatrix} \bigg\rangle,
      \end{equation}
      where
      \begin{equation} \label{eq:9}
         X_n(x) = \prod_{j = n}^{n+N-1} B_j(x)
         \quad\text{and}\quad
          E = 
          \begin{pmatrix}
            0 & -1 \\
              1 & 0
          \end{pmatrix}.
      \end{equation}
      One defines Chebyshev polynomials of the second kind by the formula
      \[
         U_n(x) = \frac{\sin\big((n+1) \arccos x \big)}{\sin \arccos x}, \quad x \in (-1, 1).
      \]
      For our applications it is more convenient to use polynomials $w_n$ given by
      \begin{equation} \label{eq:defCzebyszewII}
         w_n(x) = U_n(x/2).
      \end{equation}
      
   \section{Turán determinants and the measure of orthogonality} \label{sec:TuranAndTheMeasure}
      In the case of bounded sequences $a$ and $b$ there is a close connection between Turán determinants and the density of the measure $\mu$. The first result in this direction was obtained by Nevai (see \cite[Theorem 7.34]{Nevai79} and \cite[Theorem 1]{DombrowskiNevai1986}). The following theorem is a generalization of this result (by taking $N=1$ we obtain Nevai's theorem) and was proved in \cite[Theorem~6]{JGVA}. The formulation is close to \cite[Theorem 8]{WVA}.
      \begin{tw}[Geronimo, Van Assche] \label{tw:Geronimo-VanAssche}
         Let $N$ be a positive integer. Suppose that
         \[
            \calV_N(a_n : n \in \NN) + \calV_N(b_n : n\in \NN) < \infty.
         \]
         Let
         \[
            \lim_{k \rightarrow \infty} a_{Nk+i} = \alpha_i > 0, \quad \lim_{k \rightarrow \infty} b_{Nk+i} = \beta_i
         \]
         and
         \[
            \mathcal{F}(x) = \prod_{j=0}^{N-1} 
               \begin{pmatrix}
                  0 & 1 \\
                  -\frac{\alpha_{j-1}}{\alpha_j} & \frac{x - \beta_j}{\alpha_j}
               \end{pmatrix}.
         \]
         Then for each
         \[
            x \in E = \{ y : \discr{\mathcal{F}(y)} < 0 \}
         \]
         the limit
         \[
            g(x) = \lim_{n \rightarrow \infty} a_{n+N-1} D^N_n(x)
         \]
         exists and defines a continuous function without zeros in $E$ (cf. \eqref{eq:zbieznoscTurana}).
         Moreover, the measure $\mu$ is absolutely continuous on $E$, its density can be expressed by the formula
         \[
            \mu'(x) = \frac{\sqrt{-\discr{\mathcal{F}(x)}}}{2 \pi |g(x)|}, \quad x \in E
         \]
         and the set of limit points of the support of $\mu$ is the closure of $E$.
      \end{tw}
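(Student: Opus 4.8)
The plan is to analyse the transfer matrices \(X_n(x)\) on the elliptic set \(E\) by a bounded-variation (Levinson-type) argument, to deduce from it uniform two-sided bounds on \(p_{n-1}^2+p_n^2\), and then (i) to obtain the convergence and non-vanishing of \(g\) from a telescoping estimate, and (ii) to identify \(\mu'\) by matching the amplitudes produced by the diagonalisation with a Máté--Nevai--Totik-type theorem. For the reductions: since \(\det B_n(x)=a_{n-1}/a_n\), the product in \eqref{eq:9} telescopes, \(\det X_n(x)=a_{n-1}/a_{n+N-1}\), whence \(\det\mathcal F(x)=\prod_{j=0}^{N-1}\alpha_{j-1}/\alpha_j=1\) (indices modulo \(N\)); so, by the definition of \(\discr\), \(\discr\mathcal F(x)=(\tr\mathcal F(x))^2-4\) and \(E=\{x:|\tr\mathcal F(x)|<2\}\) is a finite union of open intervals on which \(\mathcal F(x)\) has eigenvalues \(\ue^{\pm\mathrm i\theta(x)}\) with \(\theta(x)=\arccos(\tfrac12\tr\mathcal F(x))\in(0,\pi)\), real-analytic in \(x\). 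Writing \(u_n(x)=\binom{p_{n-1}(x)}{p_n(x)}\), \eqref{eq:1} gives \(u_{n+N}(x)=X_n(x)u_n(x)\), and \eqref{eq:2} reads \(D^N_n(x)=\langle E X_n(x)u_n(x),u_n(x)\rangle\) with \(E\) the matrix of \eqref{eq:9}. Since \(\alpha_i>0\), the sequence \((a_n)\) is bounded and bounded away from \(0\); with \eqref{eq:29} and \(\calV_N(a_n)+\calV_N(b_n)<\infty\) this yields, locally uniformly on \(E\), that \(\sup_n\|X_n(x)\|<\infty\), that the \(N\)-variation of \((X_n(x))_n\) is finite, and that \(X_{Nk+i}(x)\to\mathcal F_i(x)\) as \(k\to\infty\), where \(\mathcal F_i(x)\) is the \(i\)-th cyclic rearrangement of the product \(\mathcal F(x)=\mathcal F_0(x)\), so that \(\discr\mathcal F_i(x)=\discr\mathcal F(x)\).

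Next, the a priori bounds. On each compact \(K\subset E\) one produces, by a discrete Levinson-type (Benzaid--Lutz) argument, invertible matrices \(C_n(x)\) with \(\sup_n\|C_n^{\pm1}\|<\infty\), finite \(N\)-variation and \(C_{Nk+i}(x)\to C^{(i)}(x)\) invertible and real-analytic on \(K\), such that \(C_{n+N}(x)^{-1}X_n(x)C_n(x)=\mathcal R_n(x)\) has finite \(N\)-variation with \(\mathcal R_{Nk+i}(x)\to R_{\theta(x)}\), the rotation by \(\theta(x)\): one first conjugates \(\mathcal F_i(x)\) to \(R_{\theta(x)}\) via its analytic eigenvectors, then absorbs the summable error. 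Since a rotation is an isometry and the corrections form a summable multiplicative perturbation, \(\|C_n(x)^{-1}u_n(x)\|\) converges along each residue class \(i\bmod N\) to some \(L_i(x)^{1/2}\), and \(L_i(x)>0\) because \((p_n(x))\) has no two consecutive zeros (otherwise \(p_0\equiv1\) would vanish), so \(\|C_n(x)^{-1}u_n(x)\|\) is positive and only multiplicatively perturbed from a positive value. Consequently \(0<\inf_{n,\,x\in K}(p_{n-1}^2+p_n^2)(x)\le\sup_{n,\,x\in K}(p_{n-1}^2+p_n^2)(x)<\infty\). Carrying out this diagonalisation and verifying that every error is genuinely summable under the mere \(N\)-variation hypothesis (as opposed to \(\ell^1\)) is the main technical obstacle.

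Granting the bounds, expanding \(u_{n+1}\) via \eqref{eq:1} writes \(a_{n+N}D^N_{n+1}(x)-a_{n+N-1}D^N_n(x)\) as a bilinear form in \(u_n(x)\) whose coefficients are polynomials in \(x\) times the differences \(a_{n+1}-a_{n+1-N}\), \(b_{n+1}-b_{n+1-N}\), \(a_n^{-1}-a_{n-N}^{-1}\) and the like; on \(K\) these are thus bounded by \(\mathrm{const}\cdot(|a_{n+1}-a_{n+1-N}|+|b_{n+1}-b_{n+1-N}|+|a_n^{-1}-a_{n-N}^{-1}|)\), which is summable, so \((a_{n+N-1}D^N_n(x))_n\) is uniformly Cauchy on \(K\) and the limit \(g\) in \eqref{eq:zbieznoscTurana} exists and is continuous on \(E\). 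Passing to the \(C_n\)-coordinates, using \(C^\top E C=(\det C)E\) (with \(E\) as in \eqref{eq:9}) and \(\langle E R_{\theta(x)}v,v\rangle=-\sin\theta(x)\|v\|^2\), one obtains \(|g(x)|=\alpha_{N-1}\,|\det C^{(0)}(x)|\,L_0(x)\,\sin\theta(x)>0\) on \(E\); equivalently, \(g(x_0)=0\) would force \(L_0(x_0)=0\).

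Finally, the density and the support. By the Christoffel--Darboux formula, \(\sum_{k=0}^{n-1}p_k^2(x)\) equals \(n\) times the Cesàro average of \((p_{n-1}^2+p_n^2)(x)\) up to \(o(n)\) and an explicit trigonometric factor, which the diagonalisation evaluates as an explicit combination of the \(L_i(x)\), \(C^{(i)}(x)\) and \(\theta(x)\). On the other hand \((a_n),(b_n)\) satisfies the hypotheses of a Máté--Nevai--Totik-type theorem and of Gilbert--Pearson subordinacy theory relative to the periodic background: on \(E\), \(\mu\) is purely absolutely continuous with positive continuous density, \(\RR\setminus\overline E\) carries no continuous part, and \(\tfrac1n\sum_{k=0}^{n-1}p_k^2(x)\to\omega(x)/\mu'(x)\), where \(\omega\) is the density of the equilibrium (equivalently, density-of-states) measure of \(\overline E\), equal on the bands to \(|\theta'(x)|/(\pi N)\). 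Eliminating the amplitudes between these relations and the formula for \(|g(x)|\) — which uses the classical identity linking the \(C^{(i)}\) to \(\theta'\) for a periodic background, and pins the universal constant \(\tfrac1{2\pi}\) by specialising to the constant-coefficient (Chebyshev) case where \(a_{n+N-1}D^N_n\) and \(\mu'\) are both explicit — gives
\[
   \mu'(x)=\frac{\sqrt{-\discr\mathcal F(x)}}{2\pi|g(x)|},\qquad x\in E.
\]
Since \(\mu'>0\) on the open set \(E\), whose points are non-isolated, \(\overline E\subseteq\operatorname{supp}\mu\); conversely on \(\RR\setminus\overline E\) the matrix \(\mathcal F(x)\) is hyperbolic, the recurrence has a one-dimensional space of \(\ell^2\) solutions, and the analytic function whose vanishing at \(x\) signals \((p_n(x))\in\ell^2\) is not identically zero (it is nonzero for \(|x|\) large), so \(\operatorname{supp}\mu\cap(\RR\setminus\overline E)\), contained in its zero set, is locally finite. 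Hence the set of limit points of \(\operatorname{supp}\mu\) is exactly \(\overline E\).
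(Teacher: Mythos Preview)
The paper does not contain a proof of this theorem. It is stated there as a known result, attributed to Geronimo and Van Assche and cited from \cite[Theorem~6]{JGVA} (with a reformulation close to \cite[Theorem~8]{WVA}); the paper then uses it as a black box in the proofs of Theorems~\ref{tw:przypadekRegularny} and~\ref{tw:przypadekKrytyczny}. So there is nothing to compare your argument against within this paper.

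That said, your outline is broadly in the spirit of how such results are proved in the literature: a Levinson/Benzaid--Lutz diagonalisation of the $N$-step transfer matrices on the elliptic set, followed by a telescoping estimate to get convergence of $a_{n+N-1}D^N_n$, and an identification of the density. Two places deserve more care. First, you yourself flag the main gap: under only the $\calV_N$ hypothesis (rather than an $\ell^1$ perturbation of a periodic background), the diagonalisers $C_n(x)$ must be constructed so that the residual errors are genuinely summable and the conjugated one-step maps converge to a rotation; this is exactly the technical heart of \cite{JGVA}, and your sketch does not supply it. Second, your identification of $\mu'$ leans on a Máté--Nevai--Totik-type limit for $\tfrac1n\sum p_k^2$ together with an equilibrium-measure formula $\omega=|\theta'|/(\pi N)$; in the Geronimo--Van Assche setting the density formula is obtained more directly from the asymptotics of the polynomials and the Weyl $m$-function (equivalently, from the explicit form of the Green function for the periodic background), and invoking an MNT theorem here risks circularity unless you can point to a version whose hypotheses are implied by the present ones. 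The support statement via hyperbolicity and analyticity of a ``subordinacy'' function is correct in spirit but again needs the precise input that outside $\overline E$ the $\ell^2$ condition cuts out a discrete analytic zero set.
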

      
      Theorem~\ref{tw:Geronimo-VanAssche} can be applied only in the case when the sequences
      $a$ and $b$ are bounded. Theorem~\ref{tw:przypadekRegularny} is a version of Theorem~\ref{tw:Geronimo-VanAssche}
      which can be used in unbounded cases. Its proof is inspired by techniques employed in \cite{AptekarevGeronimo2016}.
      
      \begin{proof}[Proof of Theorem~\ref{tw:przypadekRegularny}]
         Consider the sequence of measures $(\mu_K \colon K \in \mathbb{N})$ obtained from polynomials $(p_n^K : n \in \NN)$ associated with the "truncated" sequences $a^K$ and $b^K$, i.e. defined by
         \begin{equation} \label{eq:3}
            a^K_{n} = a_n, \quad b^K_{n} = b_n \quad (n \leq K+N-1)
         \end{equation}
         and extended $N$-periodically by
         \begin{equation} \label{eq:3cd}
            a^K_{K+kN+i} = a_{K+i}, \quad b^K_{K+kN+i} = b_{K+i} \quad (k \geq 0,\ i \in \{ 0, 1, \ldots, N-1 \}).
         \end{equation}
         Let $X^K_n$ denote the matrix defined by \eqref{eq:9} for sequences $a^K$ and $b^K$.
         
         Applying Theorem~\ref{tw:Geronimo-VanAssche} to $\mu_K$ one gets
         \begin{equation} \label{eq:wyrazenieMuKReg}
            \mu_K'(x) = \frac{\sqrt{-\discr{\mathcal{F}_K(x)}}}{2 \pi |g_K(x)|}, \quad x \in E_K,
         \end{equation}
         where by \eqref{eq:3cd} one has that $\mathcal{F}_K(x)$ and $X^K_{K+N}(x)$ are conjugated to each other. Therefore,
         \[
            \discr{\mathcal{F}_K(x)} = \discr{X^K_{N+K}(x)}.
         \]
         Moreover,
         \begin{equation} \label{eq:discrKReg}
            X^K_{K+N} =
            \prod_{j=1}^{N-1}
            \begin{pmatrix}
               0 & 1 \\
               -\frac{a_{K+j-1}}{a_{K+j}} & \frac{x - b_{K+j}}{a_{K+j}}
            \end{pmatrix} \cdot
            \begin{pmatrix}
               0 & 1 \\
               -\frac{a_{K+N-1}}{a_K} & \frac{x - b_K}{a_K}
            \end{pmatrix}.
         \end{equation}
         Let us denote
         \begin{align*}
            g_K(x) &= \lim_{n \rightarrow \infty} a^K_{n+N-1} [p^K_n(x) p^K_{n+N-1}(x) - p^K_{n-1}(x) p^K_{n+N}(x)] \\
            E_K &= \{ y : \discr{\mathcal{F}_K(y)} < 0 \}.
         \end{align*}
         
         Notice that \eqref{tw:1:1} is just Carleman's condition. Therefore, the moment problem for $\mu$ is determinate. Observe that $p_n(x) = p^K_n(x)$ for $n \leq K+N$. Hence, the moments of $\mu$ and $\mu_K$ coincide up to order $K+N$. Therefore, $\mu_K \rightarrow \mu$ in the weak-* topology. Because the measure $\mu$ is Borel, hence regular, it is sufficient to show that for every compact interval $I \subset \RR$ one has
         \begin{equation} \label{eq:zbieznoscZwartych}
            \lim_{K \rightarrow \infty} \mu_K(I) = \int_{I} \frac{\sqrt{-\discr{\mathcal{F}}}}{2 \pi |g(x)|} \ud x.
         \end{equation}
         Fix a compact interval $I \subset \RR$. From now, the term uniform convergence refers to the uniform convergence on $I$.
         
         Observe that \eqref{eq:7} implies that $(a_{n+N}/a_n : n \in \NN)$ converges to a number $a$. Condition \eqref{tw:1:2} forces that $a \geq 1$, whereas \eqref{tw:1:1} show that $a \leq 1$. Hence, 
         \begin{equation} \label{eq:granicaIlorazuAReg}
            \lim_{n \rightarrow \infty} \frac{a_{n+N}}{a_n} = 1.
         \end{equation}
         Since
         \begin{equation} \label{eq:ilorazSklejenia}
            \frac{a_{K+N-1}}{a_K} = \frac{a_{K-1}}{a_{K}} \frac{a_{K+N-1}}{a_{K-1}}
         \end{equation}
         and \eqref{tw:1:2}, by \eqref{eq:discrKReg} and \eqref{eq:granicaIlorazuAReg} one gets $\discr{\mathcal{F}_K(x)} \rightarrow \discr{\mathcal{F}}$ almost uniformly. Therefore, the right-hand side of the numerator of \eqref{eq:wyrazenieMuKReg} converges to $\sqrt{-\discr{\mathcal{F}}}$. Consequently, \eqref{eq:regWarNaDiscr} implies that there exists $K_0$ such that for all $K \geq K_0$ one has $I \subset E_K$. Then, by Theorem~\ref{tw:Geronimo-VanAssche} the measure $\mu_K$ is absolutely continuous on $I$. Hence,
         \[
            \mu_K(I) = \int_I \mu_K'(x) \ud x.
         \]
         Therefore, the condition \eqref{eq:zbieznoscZwartych} holds as long as
         \begin{equation} \label{eq:zbieznoscGK}
            \lim_{K \rightarrow \infty} g_K(x) = g(x)
         \end{equation}
         uniformly on $I$ and $|g(x)|$ is a continuous and positive function.
         
         Let us define
         \[
            S_n(x) = a_{n+N-1} D^N_n(x), \quad S^K_n(x) = a^K_{n+N-1} [p^K_n(x) p^K_{n+N-1}(x) - p^K_{n-1}(x) p^K_{n+N}(x)].
         \]
         Formulas (21), (22) from \cite{GSBT} imply
         \begin{multline} \label{eq:oszacowanieFn}
            | S^K_{n+1}(x) - S^K_n(x) | \leq \lVert X^K_n(x) \rVert \bigg( a^K_{n-1} \bigg|\frac{a^K_{n+N}}{a^K_n} - \frac{a^K_{n+N-1}}{a^K_{n-1}} \bigg| + |x| a^K_{n+N} \bigg| \frac{1}{a^K_{n+N}} - \frac{1}{a^K_n} \bigg| \\
            + a^K_{n+N} \bigg| \frac{b^K_{n+N}}{a^K_{n+N}} - \frac{b^K_{n}}{a^K_{n}} \bigg| \bigg) \left( (p^K_{n-1}(x))^2 + (p^K_{n}(x))^2 \right).
         \end{multline}
         Therefore, by \eqref{eq:3cd}, $S^K_{n+1} = S^K_n$ for $n \geq K+1$, and consequently, $g_K(x) = S^K_{K+1}(x)$.
         
         Observe, that equations \eqref{eq:2} and \eqref{eq:3} show
         \[
            X^K_K(x) = X_K(x), \quad S^K_K(x) = S_K(x).
         \]
         Then \eqref{eq:oszacowanieFn} gives
         \[
            | S^K_{K+1}(x) - S^K_K(x) | \leq \lVert X_K(x) \rVert a_{K-1} \bigg| 1 - \frac{a_{K+N-1}}{a_{K-1}} \bigg| ( p^2_{K-1}(x) + p^2_{K}(x) ).
         \]
         Formula (23) from \cite{GSBT} assures that
         \[
            |S_K(x)| \geq c a_{N+K-1} \left( p^2_{K-1}(x) + p^2_{K}(x) \right)
         \]
         for a constant $c>0$. Therefore, by \eqref{eq:granicaIlorazuAReg}
         \[
            F^K_K(x) := \frac{S^K_{K+1}(x) - S^K_K(x)}{S_K(x)}
         \]
         tends to $0$ uniformly. By formula (24) from \cite{GSBT}
         \[
            F_K(x) := \frac{S_{K+1}(x) - S_K(x)}{S_K(x)}
         \]
         also tends to $0$ uniformly (it is even summable). Therefore,
         \[
            \frac{S^K_{K+1}(x)}{S_{K+1}(x)} = \frac{1 + F^K_K(x)}{1 + F_K(x)}
         \]
         tends to $1$ uniformly. Consequently, 
         \[
            \lim_{K \rightarrow \infty} g_K(x) = \lim_{K \rightarrow \infty} S_{K+1}(x)
         \]
         uniformly. Finally, \cite[Corollary 2]{GSBT} implies that $S_n(x)$ converges uniformly to $g(x)$, which is a continuous function without zeros. It shows \eqref{eq:zbieznoscGK}. The proof is complete.
      \end{proof}
      
      We are ready to prove Theorem~\ref{tw:przypadekKrytyczny}, in which one has $\discr{\mathcal{F}} = 0$, so the previous result cannot be applied.

      \begin{proof}[Proof of Theorem~\ref{tw:przypadekKrytyczny}]
         We proceed similarly to the method used in the proof of Theorem~\ref{tw:przypadekRegularny}. We consider the sequence of measures $(\mu_K \colon K \in \mathbb{N})$ obtained from the polynomials $(p_n^K : n \in \NN)$ associated with the truncated sequences $a^K$ and $b^K$ (see \eqref{eq:3}).
         By the application of Theorem~\ref{tw:Geronimo-VanAssche} to $\mu_K$ we obtain
         \[
            \mu_K'(x) = \frac{\sqrt{-\discr(\mathcal{F}_K(x))}}{2 \pi |g_K(x)|}, \quad x \in E_K,
         \]
         where $\discr(\mathcal{F}_K(x)) = \discr{X^K_{N+K}(x)},\ g_K(x)$ and $E_K$ are the same as before.
         Again, it is sufficient to show that for every compact interval $I \subset \RR \backslash [x_-,x_+]$
         \begin{equation} \label{eq:5}
            \lim_{K \rightarrow \infty} \mu'_K(x) = \frac{\sqrt{h(x)}}{2 \pi |\widetilde{g}(x)|}
         \end{equation}
         uniformly on $I$ and $|\widetilde{g}(x)|$ is a continuous positive function. In fact, because the sequence $(\mu_K : K \in \NN)$ is convergent, it is sufficient to show the convergence on one subsequence.
         
         First, we want to compute the value of the limit of $\discr(\mathcal{F}_K(x))$. Notice that formula (43) from \cite{GSBT} shows decomposition
         \begin{equation} \label{eq:krytXN}
            X^K_{K+N}(x) = (-1)^{k_0 + N} \Id  + \frac{1}{a^K_{K+2N-1}} C^K_{K+N}(x).
         \end{equation}
         Our aim is to show that one has almost uniform convergence
         \begin{equation} \label{eq:rownowaznoscDiscrKryt}
            \lim_{K \rightarrow \infty} (C^K_{K+N}(x) - C_K(x)) = 0,
         \end{equation}
         where $C_K(x)$ is defined by
         \[
            C_K(x) = a_{K+N-1}(X_K - (-1)^{N+k_0} \Id).
         \]
         
         Let $B^K_n(x)$ denotes the transfer matrix associated with sequences $a^K$ and $b^K$. 
         The formula (43) from \cite{GSBT} expressing $C^K_{K+N}(x)$ involves the following matrices:
         \begin{itemize}
            \item $B^K_{K+N+j}(x)$ for $j \in \{ 0, 1, \ldots, N-2 \}$,
            
            \item $a^K_{K+2N-1}(B^K_{K+N+j}(x) - \tilde{B}^K_{K+N}(x))$  for $j \in \{ 0, 1, \ldots, N-1 \}$ and
            \[
               \tilde{B}^K_{K+N}(x) = 
               \begin{pmatrix}
                  0 & 1 \\
                  -1 & \frac{x}{a^K_{K+N}} - q
               \end{pmatrix}
               = 
               \begin{pmatrix}
                  0 & 1 \\
                  -1 & \frac{x}{a_{K}} - q
               \end{pmatrix} = \tilde{B}_{K}(x),
            \]
            
            \item $\tilde{C}^K_{K+N}(x) = a^K_{K+N} ((\tilde{B}^K_{K+N}(x))^N - (-1)^{k_0+N} \Id)$, which equals $\tilde{C}_K(x)$.
         \end{itemize}
         One has that $B^K_{K+N+j}(x) = B_{K+j}(x)$ for $0<j<N$ and
         \[
            B^K_{K+N}(x) = 
            \begin{pmatrix}
               0 & 1 \\
               -\frac{a_{K+N-1}}{a_K} & \frac{x - b_K}{a_K}
            \end{pmatrix}.
         \]
         Since the sequence $(a_n - a_{n-1} : n \in \NN)$ is bounded 
         \begin{equation} \label{eq:granicaIlorazuAKryt}
            \lim_{n \rightarrow \infty} \frac{a_{n+1}}{a_n} = 1,
         \end{equation}
         which, by \eqref{eq:ilorazSklejenia}, shows that 
         \[
            \lim_{K \rightarrow \infty} (B^K_{K+N}(x) - B_K(x)) = 0
         \]
         almost uniformly.
         Similarly, for $0<j<N$ one has
         \[
            a^K_{K+2N-1}(B^K_{K+N+j}(x) - \tilde{B}^K_{K+N}(x)) = a_{K+N-1} (B_{K+j}(x) - \tilde{B}_K(x))
         \]
         and by (48) from \cite{GSBT}
         \[
            a^K_{K+2N-1}(B^K_{K+N}(x) - \tilde{B}^K_{K+N}(x)) = 
            \frac{a_{K+N-1}}{a_K}
            \begin{pmatrix}
               0 & 0 \\
               a_K - a_{K+N-1} & q a_K - b_K
            \end{pmatrix}.
         \]
         Observe,
         \begin{equation} \label{eq:10}
            a_K - a_{K+N-1} = (a_K - a_{K-1}) - (a_{K+N-1} - a_{K-1}),
         \end{equation} 
         which combined with \eqref{eq:granicaIlorazuAKryt} and assumption \eqref{tw:2:2} shows that
         \[
            \lim_{K \rightarrow \infty} (a^K_{K+2N-1}(B^K_{K+N}(x) - \tilde{B}^K_{K+N}(x)) - a_{K+N-1}(B_{K}(x) - \tilde{B}_{K}(x))) = 0
         \]
         almost uniformly. This completes the proof of \eqref{eq:rownowaznoscDiscrKryt}.
         
         Let
         \[
            S_n(x) = a_{n+N-1}^2 D^N_n(x), \quad S^K_n(x) = (a^K_{n+N-1})^2 [p^K_n(x) p^K_{n+N-1}(x) - p^K_{n-1}(x) p^K_{n+N}(x)].
         \]
         By \eqref{eq:3cd} and \eqref{eq:krytXN}, matrices $\mathcal{F}_K(x)$ and $C^K_{N+K}$ are conjugated to each other. Hence, by \eqref{eq:4} and \eqref{eq:krytXN},
         \[
            \discr{\mathcal{F}_K(x)} = \frac{1}{a^2_{K+N-1}} \discr{C^K_{N+K}(x)}.
         \]
         By \eqref{eq:rownowaznoscDiscrKryt}, for every $i$
         \begin{equation} \label{eq:granicaGestosciKryt}
            \lim_{k \rightarrow \infty} \frac{\sqrt{-\discr{\mathcal{F}_{Nk+i}(x)}}}{a_{N(k+1)+i-1} D^N_{Nk+i}(x)} = \lim_{k \rightarrow \infty} \frac{\sqrt{-\discr{C_{Nk+i}(x)}}}{S_{Nk+i}(x)} = \frac{\sqrt{h(x)}}{\widetilde{g}^{i}(x)},
         \end{equation}
         where the last equality follows from equation (53) from \cite{GSBT} and \cite[Corollary 3]{GSBT}.
         
         Let
         \[
            \widetilde{g}^{i}_K(x) = \lim_{k \rightarrow \infty} S^K_{Nk + i}(x).
         \]
         We consider $K = kN+i$ for $0 \leq i < N$. Hence, equation \eqref{eq:5} will be satisfied if we show
         \begin{equation} \label{eq:zbieznoscGKrytyczne}
            \lim_{k \rightarrow \infty} \widetilde{g}^{i}_{Nk+i}(x) = \widetilde{g}^{i}(x)
         \end{equation}
         uniformly on $I$. In particular, \eqref{eq:granicaGestosciKryt} will imply that the limit defining $\widetilde{g}(x)$ exists and $\widetilde{g}^i(x) = \widetilde{g}(x)$ for every $0 \leq i < N$.  
         
         Formulas (54) and (55) from \cite{GSBT} give
         \begin{equation} \label{eq:6}
            |S^K_{n+N}(x) - S^K_n(x)| \leq \bigg\lVert a^K_{n+2N-1} C^K_{n+N}(x) - \frac{a^2_{n+N-1}}{a_{n-1}} C^K_n(x) \bigg\rVert ((p^K_{n+N-1}(x))^2 + (p^K_{n+N}(x))^2).
         \end{equation}
         Therefore, by \eqref{eq:3cd}, $S^K_{n+N} = S^K_n$ for $n \geq K+1$. Take $n=K+N$, then $\widetilde{g}^i_K(x) = S^K_{K+N}(x)$. Observe, that formulas \eqref{eq:2} and \eqref{eq:3} force that
         \[
            X^K_{K}(x) = X_{K}(x), \quad C^K_K(x) = C_K(x), \quad S^K_{K}(x) = S_{K}(x).
         \]
         Then \eqref{eq:6} implies
         \[
            |S^K_{K+N}(x) - S^K_K(x)| \leq a_{K+N-1} \left| 1 - \frac{a_{K+N-1}}{a_{K-1}} \right| \lVert C^K_K(x) \rVert (p^2_{K+N-1}(x) + p^2_{K+N}(x)).
         \]
         Formula (56) from \cite{GSBT} shows
         \[
            |S_K(x)| \geq c \frac{a^2_{K+N-1}}{a_{K-1}} (p^2_{K+N-1}(x) + p^2_{K+N}(x))
         \]
         for a constant $c>0$. Therefore, by \eqref{eq:granicaIlorazuAReg}
         \[
            F^K_K(x) := \frac{S^K_{K+N}(x) - S^K_K(x)}{S_K(x)}
         \]
         tends to $0$ uniformly as long as $\lVert C^K_K(x) \rVert$ is uniformly bounded. Indeed, formulas \eqref{eq:9} and \eqref{eq:krytXN} imply
         \[
            C^K_{K+N}(x) = C^K_K(x) + \frac{a_{K+N-1}}{a_K} \prod_{j=K+1}^{K+N-1} B_j(x) \cdot
            \begin{pmatrix}
               0 & 0 \\
               a_{K-1} - a_{K+N-1} & 0
            \end{pmatrix},
         \]
         which by \eqref{eq:rownowaznoscDiscrKryt}-\eqref{eq:10} and assumption \eqref{tw:2:1} implies the uniform boundedness of $\lVert C^K_K(x) \rVert$.
         By formula (57) from \cite{GSBT}
         \[
            F_K(x) := \frac{S_{K+N}(x) - S_K(x)}{S_K(x)}
         \]
         also tends to $0$ uniformly (it is even summable). Therefore,
         \[
            \frac{S^K_{K+N}(x)}{S_{K+N}(x)} = \frac{1 + F^K_K(x)}{1 + F_K(x)}
         \]
         tends to $1$ uniformly. Consequently, 
         \[
            \lim_{k \rightarrow \infty} \tilde{g}^i_{kN+N+i}(x) = \lim_{k \rightarrow \infty} S_{kN+N+i}(x)
         \]
         uniformly. Finally, \cite[Corollary 3]{GSBT} implies that $(S_{kN+i}(x) : k \in \NN)$ converges uniformly to $\tilde{g}^i(x)$, which is a continuous function without zeros. It shows \eqref{eq:zbieznoscGKrytyczne}. The proof is complete.
      \end{proof}
   
   \section{Convergence of Christoffel functions} \label{sec:convergenceChristoffel}
      Let $\lambda_n$ be the $n$th \emph{Christoffel function}, i.e.
      \[
         \lambda_n(x) = \bigg[ \sum_{k=0}^{n} p_k^2(x) \bigg]^{-1}.
      \]
      Christoffel functions have applications in approximation theory and in random matrix theory (see, e.g. \cite{Nevai19863}, \cite{Lubinsky}).
   
      We begin with corollaries which provide exact asymptotics of orthonormal polynomials under stronger assumptions 
      than Theorem~\ref{tw:przypadekRegularny} and Theorem~\ref{tw:przypadekKrytyczny}.
      
      \begin{wn}[Regular case] \label{tw:asymptReg}
         Let $N$ be an odd number. Assume that
         \[
             \calV_1\bigg(\frac{a_{n+N}}{a_n} : n\in \NN\bigg) + \calV_N\bigg(\frac{1}{a_n} : n\in \NN\bigg) + \calV_N\bigg(\frac{b_n}{a_n} : n\in \NN\bigg) < \infty.
         \]
         If
         \begin{enumerate}[(a)]
            \item
            $\begin{aligned}
               \lim_{n \rightarrow \infty} \frac{a_{n+1}}{a_n} = 1, \quad \lim_{n \rightarrow \infty} a_n = \infty, \quad \lim_{n \rightarrow \infty} \frac{b_n}{a_n} = 0,
            \end{aligned}$
            
            \item
            $\begin{aligned}
               \sum_{n=0}^\infty \frac{1}{a_n} = \infty,
            \end{aligned}$            
         \end{enumerate}
         then
         \begin{equation} \label{eq:asymptReg}
            \lim_{n \rightarrow \infty} a_n [p_{n-1}^2(x) + p_n^2(x)] = [\pi \mu'(x)]^{-1}, \quad x \in \RR
         \end{equation}
         almost uniformly.
      \end{wn}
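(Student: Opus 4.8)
The plan is to deduce the corollary from Theorem~\ref{tw:przypadekRegularny} and the matrix representation \eqref{eq:2} of the Turán determinant. First I would check that the present hypotheses give those of Theorem~\ref{tw:przypadekRegularny} with $q_i=0$ and $r_i=1$ for every $i$: condition \eqref{eq:7} is exactly the assumed total $N$-variation bound, \eqref{tw:1:1} is assumption (b), and \eqref{tw:1:2} follows from $a_n\to\infty$, $b_n/a_n\to 0$ and $a_{n+1}/a_n\to 1$ (the last also forces $a_{Nk+i-1}/a_{Nk+i}\to 1$). Hence $\mathcal{F}=J^{N}$, where $J=\bigl(\begin{smallmatrix}0&1\\-1&0\end{smallmatrix}\bigr)$; since $J^{2}=-\Id$ and $N$ is odd, $\tr\mathcal{F}=0$ and $\det\mathcal{F}=1$, so $\discr\mathcal{F}=-4<0$ and \eqref{eq:regWarNaDiscr} holds. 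Theorem~\ref{tw:przypadekRegularny} then provides a continuous nonvanishing function $g$ with $g(x)=\lim_{n}a_{n+N-1}D^N_n(x)$, the convergence being uniform on every compact interval $I\subset\RR$ (as established in the proof of that theorem), and $\mu'(x)=(\pi|g(x)|)^{-1}$. It thus suffices to prove that $a_n[p_{n-1}^2(x)+p_n^2(x)]\to|g(x)|$ uniformly on $I$.

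Set $\mathbf{p}_n(x)=(p_{n-1}(x),p_n(x))^{\mathsf{T}}$. By \eqref{eq:2} we have $D^N_n(x)=\langle E X_n(x)\mathbf{p}_n(x),\mathbf{p}_n(x)\rangle$, and $X_n(x)=\prod_{j=n}^{n+N-1}B_j(x)\to\mathcal{F}$ uniformly on $I$, since $B_j(x)\to J$ uniformly on $I$ (here $a_{j-1}/a_j\to 1$, $b_j/a_j\to 0$ and $a_j\to\infty$ are used). A direct computation gives $EJ=\Id$, so $E\mathcal{F}=EJ^{N}=J^{N-1}=\sigma\Id$ with $\sigma=(-1)^{(N-1)/2}\in\{-1,1\}$. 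Writing $E X_n(x)=\sigma\Id+(E X_n(x)-\sigma\Id)$, we obtain
\[
   a_{n+N-1}D^N_n(x)=\sigma\,a_{n+N-1}[p_{n-1}^2(x)+p_n^2(x)]+a_{n+N-1}\big\langle(E X_n(x)-\sigma\Id)\mathbf{p}_n(x),\mathbf{p}_n(x)\big\rangle.
\]

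Next I would control the remainder. For $n$ large enough $\|E X_n(x)-\sigma\Id\|\le\tfrac12$ on $I$, so $|D^N_n(x)|=|\langle E X_n(x)\mathbf{p}_n(x),\mathbf{p}_n(x)\rangle|\ge\tfrac12[p_{n-1}^2(x)+p_n^2(x)]$; hence $a_{n+N-1}[p_{n-1}^2(x)+p_n^2(x)]\le 2|a_{n+N-1}D^N_n(x)|$, which is bounded uniformly on $I$ because $a_{n+N-1}D^N_n(x)\to g(x)$ uniformly there (one may instead invoke the lower bound (23) from \cite{GSBT}, as in the proof of Theorem~\ref{tw:przypadekRegularny}). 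Consequently the remainder in the displayed identity is bounded in absolute value by $\|E X_n(x)-\sigma\Id\|\cdot a_{n+N-1}[p_{n-1}^2(x)+p_n^2(x)]$, which tends to $0$ uniformly on $I$. Therefore $\sigma\,a_{n+N-1}[p_{n-1}^2(x)+p_n^2(x)]\to g(x)$ uniformly on $I$; since the left-hand side is nonnegative and $g$ has no zeros, $\sigma g(x)=|g(x)|>0$, so $a_{n+N-1}[p_{n-1}^2(x)+p_n^2(x)]\to|g(x)|$ uniformly on $I$. Finally $a_n/a_{n+N-1}=\prod_{k=0}^{N-2}a_{n+k}/a_{n+k+1}\to 1$, whence $a_n[p_{n-1}^2(x)+p_n^2(x)]\to|g(x)|=[\pi\mu'(x)]^{-1}$ uniformly on $I$, which is the assertion.

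I expect the main obstacle to be this boundedness step: since the polynomials $p_n(x)$ may be unbounded, the convergence $E X_n(x)\to\sigma\Id$ is not by itself enough to make the remainder negligible, and one genuinely needs the two-sided control of the scaled Turán determinant $a_{n+N-1}D^N_n(x)$ — either the direct argument above or the lower estimate from \cite{GSBT} — to guarantee that $a_{n+N-1}[p_{n-1}^2+p_n^2]$ stays bounded. The other ingredients (checking the hypotheses of Theorem~\ref{tw:przypadekRegularny}, the identity $E\mathcal{F}=\sigma\Id$, and $a_n/a_{n+N-1}\to 1$) are routine.
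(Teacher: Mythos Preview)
Your argument is correct and follows the same route as the paper's: verify the hypotheses of Theorem~\ref{tw:przypadekRegularny}, compute $\discr\mathcal{F}=-4$, observe that $EX_n\to(-1)^{\lfloor N/2\rfloor}\Id$, and deduce $a_n[p_{n-1}^2+p_n^2]\to|g|=[\pi\mu']^{-1}$. The only difference is that the paper delegates this last step to \cite[Corollary~1]{GSBT} while you spell it out directly; one harmless slip --- the quantity $\sigma\,a_{n+N-1}[p_{n-1}^2+p_n^2]$ has sign $\sigma$, not ``nonnegative'' --- does not affect the conclusion $\sigma g=|g|$.
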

      \begin{proof}
         Observe that $B_n(x)$ converges to $-E$ almost uniformly. Hence, $\det{\mathcal{F}} = 1, \tr{\mathcal{F}} = 0$ and consequently, $\discr{\mathcal{F}} = -4$. Moreover, $E X_n$ tends to $(-1)^{\lfloor N/2 \rfloor} \Id$. Therefore, by \cite[Corollary 1]{GSBT}
         \[
            \lim_{n \rightarrow \infty} a_n [p_{n-1}^2(x) + p_n^2(x)] = |g(x)|.
         \]
         Theorem~\ref{tw:przypadekRegularny} implies
         \[
            |g(x)| = [\pi \mu'(x)]^{-1},
         \]
         which is what had to be proven.
      \end{proof}
      
      \begin{wn}[Critical case] \label{tw:asymptKryt}
         Let $N$ be an even number. Assume that
         \[
            \calV_N\bigg(a_{n} - a_{n-1} : n\in \NN\bigg) + \calV_N\bigg(\frac{1}{a_n} : n\in \NN\bigg) + \calV_N\bigg(b_n : n\in \NN\bigg) < \infty.
         \]
         If
         \[
            \lim_{n \rightarrow \infty} (a_n - a_{n-1}) = 0, \quad \lim_{n \rightarrow \infty} a_n = \infty, \quad \lim_{n \rightarrow \infty} b_n = 0,
         \]
         then
         \begin{equation} \label{eq:asymptKryt}
            \lim_{n \rightarrow \infty} a_n [p_{n-1}^2(x) + p_n^2(x)] = [\pi \mu'(x)]^{-1}, \quad x \in \mathbb{R} \backslash \{ 0 \}
         \end{equation}
         almost uniformly.
      \end{wn}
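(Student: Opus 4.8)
The plan is to run the argument of Corollary~\ref{tw:asymptReg} with Theorem~\ref{tw:przypadekKrytyczny} in the place of Theorem~\ref{tw:przypadekRegularny} and \cite[Corollary~3]{GSBT} in the place of \cite[Corollary~1]{GSBT}.

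First I would record that the present hypotheses are those of Theorem~\ref{tw:przypadekKrytyczny} with $q=0$: since $N$ is even, $0=2\cos\bigl(\tfrac{N}{2}\cdot\tfrac{\pi}{N}\bigr)$ is an admissible value of $q$, and it is the only one compatible with $a_n\to\infty$ and $b_n\to0$, because for $q\neq0$ the sequence $(b_n-qa_n)$ is unbounded. For $q=0$ the variation hypothesis of Theorem~\ref{tw:przypadekKrytyczny} reads $\calV_N(1/a_n)+\calV_N(a_n-a_{n-1})+\calV_N(b_n)<\infty$ and the limit hypotheses read $\lim 1/a_n=0$, $\lim(a_{Nk+i}-a_{Nk+i-1})=s_i$, $\lim b_n=0$, $\lim(a_{n+N}-a_n)=0$; all of these hold, and $\lim(a_n-a_{n-1})=0$ forces $s_i=0$ for every $i$ and, being a finite telescoping sum, also $\lim(a_{n+N}-a_n)=0$. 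Since all $s_i$ vanish, the polynomial $h$ of Theorem~\ref{tw:przypadekKrytyczny} collapses to $h(x)=\tfrac{4N^2}{4-q^2}x^2=N^2x^2$, so $x_-=x_+=0$ and, for $x\in\RR\setminus\{0\}$,
\[
   \mu'(x)=\frac{\sqrt{h(x)}}{2\pi\,|\widetilde g(x)|}=\frac{N\,|x|}{2\pi\,|\widetilde g(x)|},\qquad \widetilde g(x)=\lim_{n\to\infty}a_{n+N-1}^{2}D^{N}_{n}(x).
\]

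Next, as in the proof of Corollary~\ref{tw:asymptReg}, I would analyse the transfer matrices. Here $B_n(x)\to-E$ almost uniformly (using $a_{n-1}/a_n\to1$, $b_n/a_n\to0$, $x/a_n\to0$), so $X_n(x)\to(-E)^N=(-1)^{N/2}\Id$; hence the leading term of $D^N_n$ vanishes and one passes to the decomposition $X_n(x)=(-1)^{N/2}\Id+\tfrac{1}{a_{n+N-1}}C_n(x)$ from the proof of Theorem~\ref{tw:przypadekKrytyczny}. Writing $B_j(x)=-E+R_j(x)$ with $a_jR_j(x)\to x\,\left(\begin{smallmatrix}0&0\\0&1\end{smallmatrix}\right)$ (this is where $a_n-a_{n-1}\to0$, $b_n\to0$ and $a_{n+N}/a_n\to1$ enter), collecting the first-order contributions in $X_n(x)=\prod_{j=n}^{n+N-1}B_j(x)$ gives $C_n(x)\to x\sum_{m=0}^{N-1}(-E)^{N-1-m}\left(\begin{smallmatrix}0&0\\0&1\end{smallmatrix}\right)(-E)^{m}$; using $(-E)^2=-\Id$ and the cancellation $\sum_{m=0}^{N-1}(-1)^m=0$ valid for even $N$ (which kills the $\mathrm{diag}$-part — this is exactly where $s_i=0$ is used) one finds that this limit is $(-1)^{N/2+1}\tfrac{Nx}{2}(-E)$, whence $EC_n(x)\to(-1)^{N/2+1}\tfrac{Nx}{2}\Id$, a scalar matrix. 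This is the critical-case counterpart of the property $EX_n\to(-1)^{\lfloor N/2\rfloor}\Id$ used for Corollary~\ref{tw:asymptReg}.

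Finally, \cite[Corollary~3]{GSBT} — the critical-case analogue of \cite[Corollary~1]{GSBT} — guarantees that $a_n[p^2_{n-1}(x)+p^2_n(x)]$ converges almost uniformly on $\RR\setminus\{0\}$; its limit is identified from
\[
   a_{n+N-1}\Bigl\langle EC_n(x)\,\tbinom{p_{n-1}(x)}{p_n(x)},\tbinom{p_{n-1}(x)}{p_n(x)}\Bigr\rangle=a^{2}_{n+N-1}D^{N}_n(x)\longrightarrow\widetilde g(x),
\]
together with $\langle E v,v\rangle=0$ and $EC_n(x)\to(-1)^{N/2+1}\tfrac{Nx}{2}\Id$, which give $\lim_n a_n[p^2_{n-1}(x)+p^2_n(x)]=\tfrac{2|\widetilde g(x)|}{N|x|}=[\pi\mu'(x)]^{-1}$ by the displayed formula for $\mu'$. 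I expect the main obstacle to be this last step: verifying that \cite[Corollary~3]{GSBT} indeed supplies the asymptotics of $p^2_{n-1}+p^2_n$ (and not merely the convergence of the scaled Turán determinants), and that its hypotheses — in particular uniform boundedness of $\lVert C_n(x)\rVert$ and the total $N$-variation estimates already invoked in the proof of Theorem~\ref{tw:przypadekKrytyczny} — follow from the (slightly stronger) assumptions made here; along the way one must also keep all convergences almost uniform on compact subsets of $\RR\setminus\{0\}$, the point $x=0$ being genuinely excluded because there $h$ vanishes and the formula degenerates.
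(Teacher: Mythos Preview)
Your proposal is correct and follows essentially the same route as the paper: verify the hypotheses of Theorem~\ref{tw:przypadekKrytyczny} with $q=0$ and $s_i\equiv 0$, compute $h(x)=N^2x^2$ so that $x_-=x_+=0$, show $EC_n(x)\to(-1)^{N/2+1}\tfrac{Nx}{2}\Id$, and combine the resulting asymptotics for $a_n[p_{n-1}^2+p_n^2]$ with the density formula. Two small points of divergence: the paper obtains the limit of $C_n(x)$ by citing formulas (35) and (49) from \cite{GSBT} rather than expanding $\prod(-E+R_j)$ by hand, and---answering the doubt you raise at the end---the paper invokes \cite[Corollary~1]{GSBT} (the \emph{same} corollary as in the regular case), not \cite[Corollary~3]{GSBT}; Corollary~1 already delivers the asymptotics of $p_{n-1}^2+p_n^2$ once $EC_n(x)$ is shown to tend to a scalar matrix.
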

      \begin{proof}
         Observe that formulas (35) and (49) from \cite{GSBT} imply that $C_n(x)$ converges to $\frac{x N}{2} (-1)^{N/2} E$ almost uniformly. Therefore, $E C_n(x)$ tends to $\frac{x N}{2} (-1)^{N/2 + 1} \Id$. Hence, by \cite[Corollary 1]{GSBT}
         \[
            \lim_{n \rightarrow \infty} a_n [p_{n-1}^2(x) + p_n^2(x)] = \frac{2 |\widetilde{g}(x)|}{|x| N}.
         \]
         Theorem~\ref{tw:przypadekRegularny} implies
         \[
            |\widetilde{g}(x)| = \frac{|x N|}{2 \pi \mu'(x)}. \qedhere
         \]
      \end{proof}
      
      In \cite{simon2007orthogonal} it was proven that \emph{any} measure $\mu$ for which the moment problem is determinate satisfies
      \[
         \lim_{n \rightarrow \infty} \frac{\ud x}{a_n^2 p_n^2(x) + p_{n-1}^2(x)} = \pi \ud \mu(x)
      \]
      in the weak-* topology (Carmona-Simon formula). It somehow resembles \eqref{eq:asymptReg} and \eqref{eq:asymptKryt}. But in this case the denominator of the left-hand-side is different: $a_n^2 p_n^2(x) + p_{n-1}^2(x)$ instead of $a_n p_{n-1}^2(x) + a_n p_n^2(x)$. Moreover, Carmona-Simon formula can be applied even for singular measures, hence it gives no information about the density of $\mu$. 
      
      We now turn to applications of the previous corollaries to the convergence of Christoffel functions. 
      
      \begin{wn} \label{wn:zbieznoscChristoffel}
         Under the conditions of Corollary~\ref{tw:asymptReg} or Corollary~\ref{tw:asymptKryt} one has
         \begin{equation} \label{eq:zbieznoscChristoffel}
            \lim_{n \rightarrow \infty} \lambda_n(x) \sum_{k=0}^{n} \frac{1}{a_k} = 2 \pi \mu'(x)
         \end{equation}
         almost uniformly on $\RR$ in the regular case and on $\RR \backslash \{ 0 \}$ in the critical case.
      \end{wn}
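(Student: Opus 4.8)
The plan is to deduce the statement from the pointwise asymptotics \eqref{eq:asymptReg} (resp.\ \eqref{eq:asymptKryt}) by a weighted Ces\`aro argument with weights $1/a_k$. Write $X = \RR$ in the regular case and $X = \RR\setminus\{0\}$ in the critical case, put $t_n(x) = a_n[p_{n-1}^2(x)+p_n^2(x)]$ and $L(x) = [\pi\mu'(x)]^{-1}$, so that $t_n \to L$ almost uniformly on $X$ by Corollary~\ref{tw:asymptReg} (resp.\ Corollary~\ref{tw:asymptKryt}). The starting point is the elementary telescoping identity
\[
   \sum_{k=1}^n \frac{t_k(x)}{a_k} \;=\; \sum_{k=1}^n\big(p_{k-1}^2(x)+p_k^2(x)\big) \;=\; 2\sum_{k=0}^n p_k^2(x) - 1 - p_n^2(x),
\]
immediate after reindexing and using $p_0\equiv 1$. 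Dividing by $\Sigma_n := \sum_{k=0}^n 1/a_k$ and recalling $\lambda_n(x)^{-1} = \sum_{k=0}^n p_k^2(x)$, the assertion reduces to $\Sigma_n^{-1}\sum_{k=1}^n t_k(x)/a_k \to L(x)$ almost uniformly on $X$, provided (i) $\Sigma_n\to\infty$ and (ii) the term $1+p_n^2(x)$ is negligible after division. For (ii) one uses $p_n^2(x) = a_n^{-1}\,a_np_n^2(x) \le a_n^{-1}t_n(x)$: on a fixed compact $I\subset X$ the right-hand side tends to $0$ uniformly, since $t_n\to L$ uniformly on $I$ with $L$ bounded there ($\mu'$ being continuous and strictly positive on $X$ by Theorem~\ref{tw:przypadekRegularny}, resp.\ Theorem~\ref{tw:przypadekKrytyczny}), while $a_n\to\infty$.

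For (i): in the regular case $\Sigma_n\to\infty$ is exactly hypothesis (b) of Corollary~\ref{tw:asymptReg}; in the critical case $a_n-a_{n-1}\to 0$ gives $a_n/n\to 0$ by Ces\`aro summation, hence $a_n\le n$ eventually and $\Sigma_n\ge\sum 1/k\to\infty$. Granting (i), the weighted Ces\`aro limit is standard: on a compact $I\subset X$, given $\varepsilon>0$ choose $M$ with $|t_k(x)-L(x)|<\varepsilon$ for all $k>M$ and $x\in I$; then the truncated head $\Sigma_n^{-1}\sum_{k\le M}|t_k(x)-L(x)|/a_k$ is a fixed continuous function of $x$ on $I$ divided by $\Sigma_n\to\infty$, hence tends to $0$ uniformly on $I$, while the tail is bounded by $\varepsilon$. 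Combining with the displayed identity yields $\Sigma_n^{-1}\sum_{k=0}^n p_k^2(x) \to \tfrac12 L(x) = [2\pi\mu'(x)]^{-1}$ uniformly on $I$.

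Finally, since $\mu'$ is continuous and strictly positive on $X$, the function $[2\pi\mu'(x)]^{-1}$ is bounded and bounded away from $0$ on $I$, so passing to reciprocals preserves the uniform convergence and gives
\[
   \lambda_n(x)\,\Sigma_n \;=\; \frac{\Sigma_n}{\sum_{k=0}^n p_k^2(x)} \;\longrightarrow\; 2\pi\mu'(x)
\]
uniformly on $I$. As $I\subset X$ was an arbitrary compact set, this is almost uniform convergence on $X$, as claimed. The argument is elementary once \eqref{eq:asymptReg}/\eqref{eq:asymptKryt} are in hand; the only points that require attention are the uniformity of the Ces\`aro step (which relies on $t_n\to L$ uniformly on $I$ together with boundedness of $L$ there) and the verification that $\Sigma_n$ diverges in the critical case.
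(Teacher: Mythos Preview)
Your argument is correct. Conceptually it is the same Ces\`aro/Stolz mechanism the paper uses, but packaged differently: the paper applies the Stolz--Ces\`aro theorem directly to the ratio
\[
   \lambda_n(x)\,\Sigma_n \;=\; \frac{\sum_{k=0}^n 1/a_k}{\sum_{k=0}^n p_k^2(x)}
\]
with lag two, obtaining $\dfrac{1/a_{n-1}+1/a_n}{p_{n-1}^2(x)+p_n^2(x)}=\dfrac{a_n/a_{n-1}+1}{a_n[p_{n-1}^2(x)+p_n^2(x)]}\to\dfrac{2}{L(x)}=2\pi\mu'(x)$, which is a three-line proof. You instead pass to the reciprocal ratio, introduce the telescoping identity $\sum_{k=1}^n t_k/a_k = 2\lambda_n^{-1}-1-p_n^2$, carry out the weighted Ces\`aro averaging by hand on a compact $I$, and invert at the end. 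The paper's route is shorter; yours has the merit of making the \emph{uniform} nature of the convergence on compacta fully explicit (the head/tail split and the reciprocal step are both argued uniformly on $I$), whereas the paper's terse appeal to Stolz leaves that verification implicit. Your handling of the side issues---divergence of $\Sigma_n$ in the critical case via $a_n/n\to 0$, and negligibility of $p_n^2(x)$ via $p_n^2\le t_n/a_n$---is clean and correct.
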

      \begin{proof}
         By Stolz theorem (see \cite[pages 173–175]{Stolz1885})
         \[
            \lim_{n \rightarrow \infty} \lambda_n(x) \sum_{k=0}^{n} \frac{1}{a_k} = \lim_{n \rightarrow \infty} \frac{1/a_{n-1} + 1/a_n}{p^2_{n-1}(x) + p^2_n(x)} = \lim_{n \rightarrow \infty} \frac{a_n/a_{n-1} + 1}{a_n[p^2_{n-1}(x) + p^2_n(x)]},
         \]
         which by Corollary~\ref{tw:asymptReg} or Corollary~\ref{tw:asymptKryt} equals $2 \pi \mu'(x)$.
      \end{proof}
      
      It turns out that the right-hand side of \eqref{eq:zbieznoscChristoffel} is exactly the same as in the case of the unit circle (see \cite[Theorem 1]{MNT91}). In \cite{MNT91} conditions assuring the convergence were imposed on the measure $\mu$ not on the recurrence relation. The convergence of Christoffel functions to a wide class of measures with compact support on the real line was obtained in \cite{VT2009}.
      
      In \cite[Conjecture 6.7]{IgnA} the following conjecture has been stated.
      \begin{hipoteza}[Ignjatović \cite{IgnA}]
         \label{con:1}
         Assume that $b_n \equiv 0$ and
         \begin{equation}
            \label{eq:45}
            \lim_{n \rightarrow \infty} \frac{a_n}{n^\kappa} = c > 0, \quad \kappa \in (0, 1).
         \end{equation}
         Then 
         \begin{equation}
            \label{eq:44}
            \lim_{n \rightarrow \infty} \frac{\sum_{k=0}^n p^2_k(x)}{\sum_{k=0}^n 1/a_k}
         \end{equation}
         exists and is positive for $x \in \supp(\mu)$.
      \end{hipoteza}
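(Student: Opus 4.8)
The plan is to read the conjecture off from Corollary~\ref{wn:zbieznoscChristoffel} in its regular form, applied with $N = 1$ (note that for $N=1$ the critical case is vacuous, since the admissible set of values of $q$ is then empty). Since $\lambda_n(x)^{-1} = \sum_{k=0}^{n} p_k^2(x)$, the quotient $\sum_{k=0}^{n} p_k^2(x) \big/ \sum_{k=0}^{n} 1/a_k$ appearing in \eqref{eq:44} is exactly $\big(\lambda_n(x) \sum_{k=0}^{n} 1/a_k\big)^{-1}$. Hence, once Corollary~\ref{wn:zbieznoscChristoffel} gives $\lambda_n(x)\sum_{k=0}^{n} 1/a_k \to 2\pi\mu'(x)$, the limit in \eqref{eq:44} exists and equals $\big(2\pi\mu'(x)\big)^{-1}$. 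It then suffices to verify (i) that \eqref{eq:45} together with $b_n \equiv 0$ puts us in the scope of Corollary~\ref{tw:asymptReg} with $N=1$, and (ii) that $0 < \mu'(x) < \infty$ on $\supp(\mu)$, which makes the limit positive.

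For (i): with $N=1$ the hypotheses of Corollary~\ref{tw:asymptReg} are $\calV_1(a_{n+1}/a_n : n\in\NN) + \calV_1(1/a_n : n\in\NN) + \calV_1(b_n/a_n : n\in\NN) < \infty$, the limits $a_{n+1}/a_n \to 1$, $a_n \to \infty$, $b_n/a_n \to 0$, and Carleman's condition $\sum_n 1/a_n = \infty$. The third variation vanishes because $b_n \equiv 0$; the three limits are immediate from \eqref{eq:45}; and Carleman holds precisely because $\kappa < 1$. The remaining point is the finiteness of $\calV_1(1/a_n)$ and $\calV_1(a_{n+1}/a_n)$, which is where one needs \eqref{eq:45} to be ``tame'': for $a_n$ literally of the form $c\,(n+1)^\kappa$ one has $1/a_n$ eventually monotone and $a_{n+1}/a_n = 1 + O(n^{-1})$ with consecutive differences $O(n^{-2})$, so both $1$-variations are finite, and the same holds whenever $a_n$ differs from such a sequence by a perturbation whose relevant normalisations have bounded $1$-variation. (If one prefers, one simply adjoins $\calV_1(a_{n+1}/a_n : n \in \NN) + \calV_1(1/a_n : n \in \NN) < \infty$ to the hypotheses of the conjecture, and then (i) is automatic.)

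For (ii): apply Theorem~\ref{tw:przypadekRegularny} with $N=1$. Here $q_0 = 0$ and $r_0 = 1$, so $\mathcal{F} = \left(\begin{smallmatrix} 0 & 1 \\ -1 & 0\end{smallmatrix}\right)$, $\discr\mathcal{F} = -4 < 0$, and the theorem yields that $\mu$ is absolutely continuous on all of $\RR$ with $\mu'(x) = (\pi|g(x)|)^{-1}$ for a continuous zero-free function $g$. Thus $\mu'$ is continuous and strictly positive on $\RR$, hence $\supp(\mu) = \RR$ and $0 < \mu'(x) < \infty$ for every $x$. Combined with the first paragraph, this shows that the limit in \eqref{eq:44} exists for all $x \in \RR = \supp(\mu)$ and equals $(2\pi\mu'(x))^{-1} > 0$, which is the assertion of the conjecture — indeed slightly more, since it identifies the value of the limit.

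The one genuine obstacle is the step in (i): \eqref{eq:45} by itself only pins down the leading asymptotics of $a_n$, whereas $\calV_1(1/a_n)$ and $\calV_1(a_{n+1}/a_n)$ are governed by the fluctuations of $a_n$. So the cleanest honest statement is that the machinery of this paper proves the conjecture for all $a_n$ that are sufficiently regular in this sense (in particular for $a_n = c\,(n+1)^\kappa$); modulo that regularity the conjecture is an immediate consequence of Corollary~\ref{wn:zbieznoscChristoffel} and Theorem~\ref{tw:przypadekRegularny}.
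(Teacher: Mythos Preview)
The statement you are attempting to prove is a \emph{conjecture}, and the paper does not prove it --- it refutes it. Immediately after stating Conjecture~\ref{con:1} the paper produces two explicit sequences satisfying \eqref{eq:45} with $b_n\equiv 0$ for which the limit in \eqref{eq:44} at the point $x=0\in\supp(\mu)=\RR$ equals $0$ in one case and $+\infty$ in the other (namely $a_{2k-1}=a_{2k}=k^\kappa$ and $a_{2k}=a_{2k+1}=(k+1)^\kappa$). So there is no ``paper's own proof'' to compare with; the paper's contribution here is the pair of counterexamples together with the remark that Corollary~\ref{wn:zbieznoscChristoffel} supplies sufficient \emph{additional} conditions under which the conclusion does hold.

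Your proposal is exactly that positive remark, written out as if it were a proof of the conjecture. You correctly note that Corollary~\ref{wn:zbieznoscChristoffel} plus Theorem~\ref{tw:przypadekRegularny} with $N=1$ give the desired limit once $\calV_1(1/a_n)+\calV_1(a_{n+1}/a_n)<\infty$, and you correctly flag that \eqref{eq:45} alone does not force this. But you then treat this as a removable technicality (``one simply adjoins\ldots''), whereas the paper's counterexamples show it is precisely the point where the conjecture fails: both examples satisfy \eqref{eq:45} but violate the $1$-variation hypotheses, and the conclusion is genuinely false for them. So your step~(i) is not an obstacle to the argument --- it is the reason no such argument can exist. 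The honest outcome is not a conditional proof but the statement that Conjecture~\ref{con:1} is false as formulated.
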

      Let us observe that some additional assumptions to this conjecture are needed. Indeed, consider
      \[
         a_0 = \epsilon > 0, \quad a_{2k-1} = a_{2k} = k^{\kappa}, \quad k \geq 1, \quad \kappa \in (1/2, 1).
      \]
      Then, by \cite{MM1}, the sequence $(p_n(0) : n \in \NN)$ is square summable and $0 \in \supp(\mu) = \RR$. 
      Therefore, the limit \eqref{eq:44} equals zero. For a more general example, see \cite[Example 6.1]{GS1}.
      On the other hand it can be infinite. To see this, it is enough to consider 
      \[
         a_{2k} = a_{2k+1} = (k+1)^{\kappa}, \quad k \geq 0, \quad \kappa \in (0, 1),
      \]
      then $p_{2k}(0) = (-1)^k$ and $p_{2k+1}(0) = 0$, and hence, by Stolz theorem
      \[
         \lim_{n \rightarrow \infty} \frac{\sum_{k=0}^n p^2_k(0)}{\sum_{k=0}^n 1/a_k} = \lim_{n \rightarrow \infty} \frac{p^2_{n-1}(0) + p^2_n(0)}{1/a_{n-1} + 1/a_n} = \lim_{n \rightarrow \infty} \frac{1}{1/a_{n-1} + 1/a_n} = \infty.
      \]
      But according to \cite{MM1} one has $0 \in \supp(\mu) = \mathbb{R}$. A more general example may be found in \cite[Example 4]{GSBT}.
      
      Observe that Corollary~\ref{wn:zbieznoscChristoffel} gives not only sufficient conditions when the conjecture is correct
      but also provides the value of the limit. This value is important in the applications (see \cite[Proposition 6.6]{IgnA}).
      For a result in this direction see \cite{IgnA2}.
      
      Recently in \cite{IgnA1} Conjecture \ref{con:1} was resolved under some additional conditions. However, they are
      stronger than the hypothesis of Corollary~\ref{wn:zbieznoscChristoffel} for $N = 1$. Indeed, we have 
      \[
         \frac{a_{n+1} - a_n}{a_n^2}
         =
         \bigg(\frac{1}{a_n} - \frac{1}{a_{n+1}}\bigg) \frac{a_{n+1}}{a_n},
      \]
      thus
      \begin{equation}
         \tag{$C_6$}
         \sum_{n = 0}^\infty \frac{|a_{n+1} - a_n|}{a_n^2} < \infty,
      \end{equation}
      together with \eqref{eq:45}, implies that
      \[
         \calV_1\bigg(\frac{1}{a_n} : n \in \NN \bigg) < \infty.
      \]
      Since
      \[
         \frac{a_{n+2} - 2 a_{n+1} + a_n}{a_n} = (a_{n+2} - a_{n+1}) \bigg(\frac{1}{a_n} - \frac{1}{a_{n+1}}\bigg)
         + \bigg(\frac{a_{n+2}}{a_{n+1}} - \frac{a_{n+1}}{a_n}\bigg),
      \]
      the condition 
      \begin{equation}
         \tag{$C_2$}
         \lim_{n \to \infty} (a_{n+1} - a_n) = 0
      \end{equation}
      together with
      \begin{equation}
         \tag{$C_7$}
         \sum_{n = 0}^\infty \frac{|a_{n+2} - 2 a_{n+1} + a_n|}{a_n} < \infty
      \end{equation}
      gives 
      \[
         \calV_1\bigg(\frac{a_{n+1}}{a_n} : n \in \NN \bigg) < \infty.
      \]
      From the other side, conditions $(C_2)$ and $(C_3)$ are not necessary. For example, taking $a_n = n+1$, we have
      \[
         \calV_1\bigg(\frac{a_{n+1}}{a_n} : n \in \NN\bigg) < \infty,
      \]
      thus we can apply Corollary~\ref{wn:zbieznoscChristoffel}, but $a_{n+1} - a_n = 1$ and the condition $(C_2)$ is not satisfied. 

      We want also to stress that the sequence $a_n = \log (n+2)$ does not satisfy \eqref{eq:45}, but Corollary~\ref{wn:zbieznoscChristoffel} is
      applicable since
      \[
         \calV_1(a_{n+1} - a_n : n \in \NN) + \calV_1\bigg(\frac{1}{a_n} : n \in \NN\bigg) < \infty.
      \]
      
   \section{Examples} \label{sec:Examples}
      This section is devoted to illustrate the numerical usefulness of the formulas from Theorem~\ref{tw:przypadekRegularny} and Theorem~\ref{tw:przypadekKrytyczny}. The computations were performed on Maple 18\footnote{The file with the program is available on the author's website: \url{http://www.math.uni.wroc.pl/~gswider/research/}}. Furthermore, we show that results from Section~\ref{sec:convergenceChristoffel} sometimes can be used even when one does not have the exact values of the sequences $a$ and $b$.
      
      \begin{przyklad} \label{ex:1}
         Let $p_n$ be a sequence of \emph{generalized Hermite polynomials} (see \cite[p. 157]{Chihara5}), i.e.
         \[
            a_n = \frac{1}{\sqrt{2}} \sqrt{n+1+d_n}, \quad d_{2k} = t, \quad d_{2k+1} = 0, \quad b_n \equiv 0, 
         \]
         where $t > -1$. Then 
         \[
            \mu'(x) = \frac{1}{\Gamma((1+t)/2)} |x|^{t} \ue^{-x^2}.
         \]
         
         The assumptions of Corollary~\ref{tw:asymptKryt} for $N=2$ are satisfied. Indeed, from Taylor's formula one gets
         \[
            a_n - a_{n-1} = \frac{1}{2\sqrt{2}} (1 + d_n - d_{n-1}) \frac{1}{\sqrt{n+d_{n-1}}} + r_n, \quad (r_n) \in \ell^1.
         \]
         This, combined with 2-periodicity of $d_n$ and Taylor's formula, implies that the sequence $(a_n-a_{n-1} : n \geq 1)$ has bounded total $2$-variation. Finally, the sequence $(1/a_n : n \in \NN)$ has also bounded total $2$-variation because $a_{n+2} > a_n$ for every $n$. Therefore
         \[
            \lim_{n \rightarrow \infty} a_n [p_{n-1}^2(x) + p_n^2(x)] = \frac{\Gamma((1+t)/2)}{\pi} |x|^{-t} \ue^{x^2}
         \]
         almost uniformly on $\RR \backslash \{ 0 \}$.
         
         For $t < 0$ the density is not continuous at $0$. For $t \geq 0$ the density is $C^{\lceil t \rceil - 1}(\mathbb{R})$, with the exception when $t \in 2 \NN$. In this case the density is $C^\infty(\RR)$. It shows that the restriction of Theorem~\ref{tw:przypadekKrytyczny} to the interval $\RR \backslash [x_-,x_+]$ instead of $\RR \backslash (x_-,x_+)$ is necessary.
         
         Observe that assumptions of Theorem~\ref{tw:przypadekRegularny} cannot be satisfied for $t \neq 0$, because of the behaviour of $\mu'$ at $0$.
         
         Table~\ref{tab:1} contains the results of the approximation of the density by the formula from Theorem~\ref{tw:przypadekKrytyczny}, whereas on the Figure~\ref{img:1}, there is a graph of the relative error of the approximation. Let us observe that for $t \neq 0$ the neighbourhood of the point $0$ is particularly difficult to mimic by the approximation. It is caused by the singularity of $\mu'$ at $0$. Overall, away from $0$ errors are reasonably small.
         
         \begin{table}
            \label{tab:1}
            \begin{center}
               \begin{tabularx}{\textwidth}{|l||X|X|X|X|X|X|} \hline
                  $\pmb{t \backslash n}$ & \textbf{10} & \textbf{20} & \textbf{40} & \textbf{60} & \textbf{80} & \textbf{100} \\ \hline \hline
                  \textbf{-0.5} & 1.32e-1 & 7.17e-2 & 6.68e-2 & 7.36e-2 & 6.84e-2 & 5.78e-2 \\
                  \textbf{0.0 (N=1)} & 1.68e-1 & 7.21e-2 & 3.55e-2 & 2.17e-2 & 1.70e-2 & 1.39e-2 \\
                  \textbf{0.0} & 1.18e-1 & 5.59e-2 & 2.97e-2 & 2.02e-2 & 1.53e-2 & 1.23e-2 \\
                  \textbf{0.5} & 4.12e-1 & 2.66e-1 & 1.30e-1 & 6.61e-2 & 2.98e-2 & 2.42e-2 \\
                  \textbf{1.0} & 1.57e0 & 9.84e-1 & 5.25e-1 & 3.21e-1 & 2.06e-1 & 1.33e-1 \\ \hline
               \end{tabularx}
            \end{center}
            \caption{The maximal relative errors of the approximation of generalized Hermite weight via $D_n^N$. If not specified, then $N=2$.}
         \end{table}
         
         \begin{figure}
           \label{img:1}
           \caption{The left-hand side: the graph of generalized Hermite weight for $t=1/2$. The right-hand side: the graph of the relative error of the approximation by $D^2_{80}$.}
           \centering
             \includegraphics[width=0.49\textwidth]{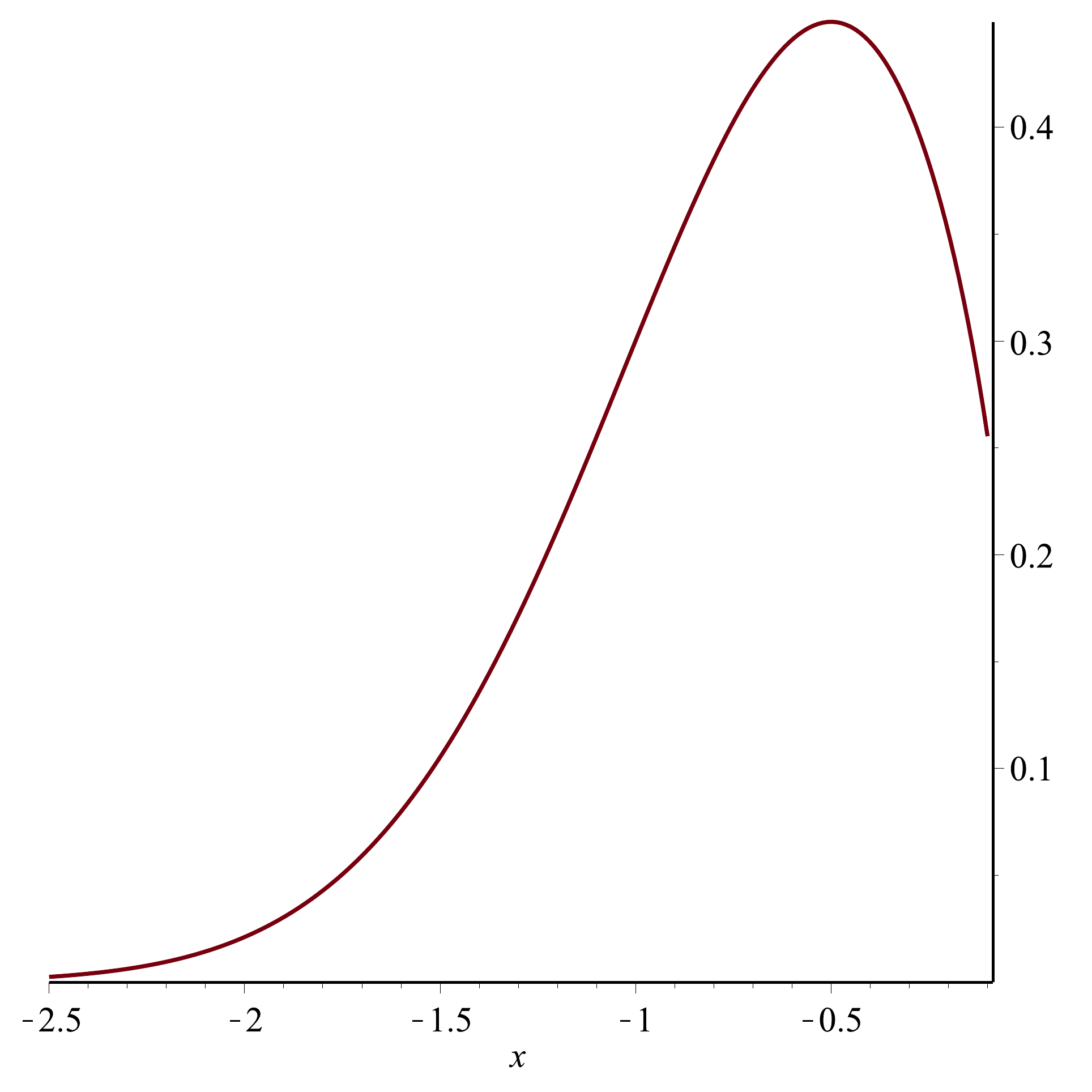}
             \includegraphics[width=0.49\textwidth]{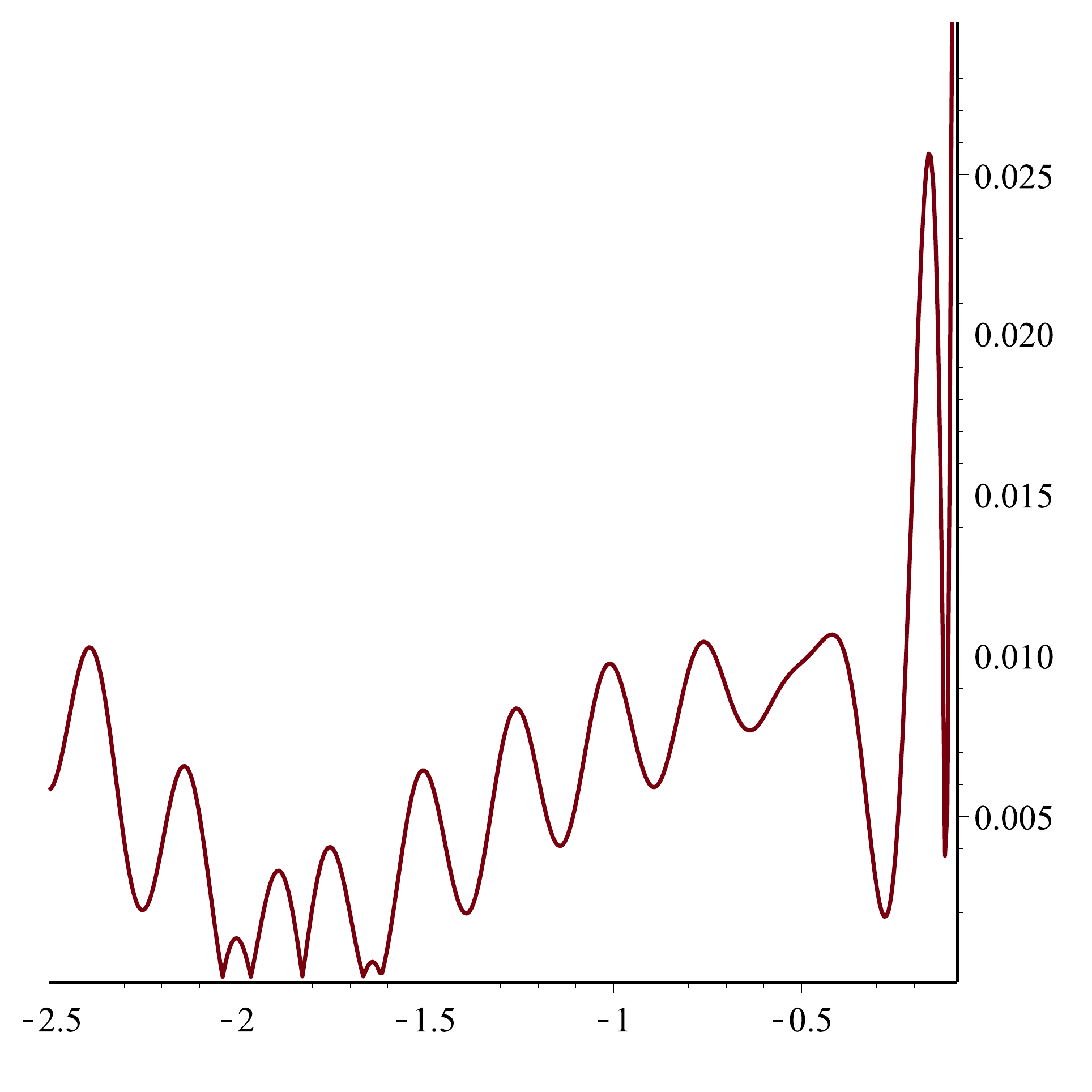}
         \end{figure}
      \end{przyklad}
         
      \begin{przyklad} \label{ex:2}
         Let $p_n$ be a sequence of \emph{Meixner-Pollaczek polynomials} (see \cite[Chapter 9.7]{Askey98}), i.e.
         \[
            a_n = \frac{\sqrt{(n+1) (n + 2 \lambda)}}{2 \sin \varphi}, \quad b_n = \frac{n + \lambda}{\tan \varphi} \quad (\lambda > 0,\ \varphi \in (0, \pi)).
         \]
         Then
         \[
            \mu'(x) = \frac{(2 \sin \varphi)^{2 \lambda}}{2 \pi \Gamma(2 \lambda)} \ue^{(2 \varphi - \pi) x} |\Gamma(\lambda + i x)|^2.
         \]
         
         The assumptions of Theorem~\ref{tw:przypadekRegularny} for $N=1$ are satisfied. Indeed, since the sequence $(a_n : n \in \NN)$ is increasing, the sequence $(1/a_n : n \in \NN)$ has bounded total $1$-variation. Because
         \[
            \frac{a_{n+1}}{a_{n}} = \sqrt{\frac{n+2}{n+1}} \sqrt{\frac{n+1 + 2\lambda}{n + 2\lambda}}
         \]
         and for every $a > 0$ the sequence $\widetilde{a}_n = \sqrt{n+a}$  satisfies the conditions of Theorem~\ref{tw:przypadekRegularny} with $N=1$, by \eqref{eq:29}, we obtain that the sequence $(a_{n+1}/a_n : n \in \NN)$ has bounded total $1$-variation. Finally, one has
         \[
            \frac{b_n}{a_n} = 2 \cos{\varphi} \frac{n+\lambda}{\sqrt{(n + 1)(n + 2\lambda)}}
         \]
         and because it has constant sign, it is enough to show that its square is eventually increasing. Indeed, 
         \[
            \frac{(n+\lambda)^2}{(n + 1)(n + 2\lambda)} = 1 - \frac{n + 2 \lambda - \lambda^2}{(n + 1)(n + 2\lambda)}.
         \]
         
         In Table~\ref{tab:2} there are results of the approximation of the density by the formula from Theorem~\ref{tw:przypadekRegularny}, whereas on the Figure~\ref{img:2}, there is a graph of the relative error of the approximation, which looks particularly simple. In all of the cases, relative errors are reasonably small from $n=40$.
         
         \begin{table}
            \label{tab:2}
            \begin{center}
               \begin{tabularx}{\textwidth}{|l||X|X|X|X|X|X|} \hline
                  $\pmb{\lambda, \varphi \backslash n}$ & \textbf{10} & \textbf{20} & \textbf{40} & \textbf{60} & \textbf{80} & \textbf{100} \\ \hline \hline
                  \textbf{0.5, $\pmb{\pi}$/4} & 1.27e-1 & 6.53e-2 & 3.56e-2 & 2.43e-2 & 1.84e-2 & 1.48e-2 \\
                  \textbf{0.5, $\pmb{\pi}$/3} & 9.29e-2 & 4.84e-2 & 2.58e-2 & 1.76e-2 & 1.32e-2 & 1.05e-2 \\
                  \textbf{0.5, $\pmb{\pi}$/2} & 4.86e-2 & 2.47e-2 & 1.24e-2 & 8.30e-3 & 6.23e-3 & 4.99e-3 \\
                  \textbf{1.0, $\pmb{\pi}$/2} & 4.86e-2 & 2.47e-2 & 1.24e-2 & 8.30e-3 & 6.23e-3 & 4.99e-3 \\ \hline
               \end{tabularx}
            \end{center}
            \caption{The maximal relative errors of the approximation of Meixner-Pollaczek weight via $D_n^1$.}
         \end{table}
         
         \begin{figure}
           \label{img:2}
           \caption{The left-hand side: the graph of Meixner-Pollaczek weight for $\lambda=1/2, \varphi=\pi/3$. The right-hand side: the graph of the relative error of the approximation by $D^1_{60}$.}
           \centering
             \includegraphics[width=0.49\textwidth]{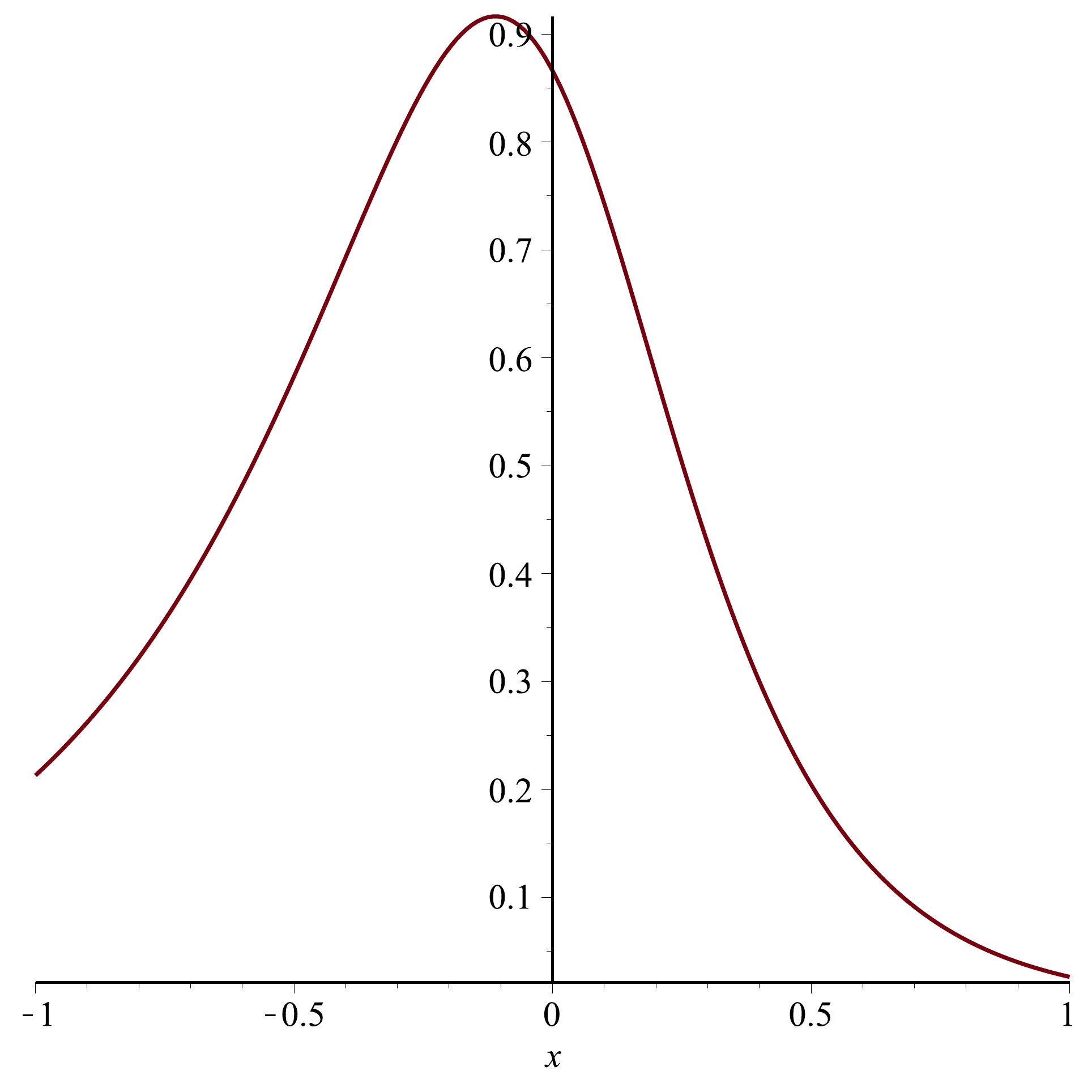}
             \includegraphics[width=0.49\textwidth]{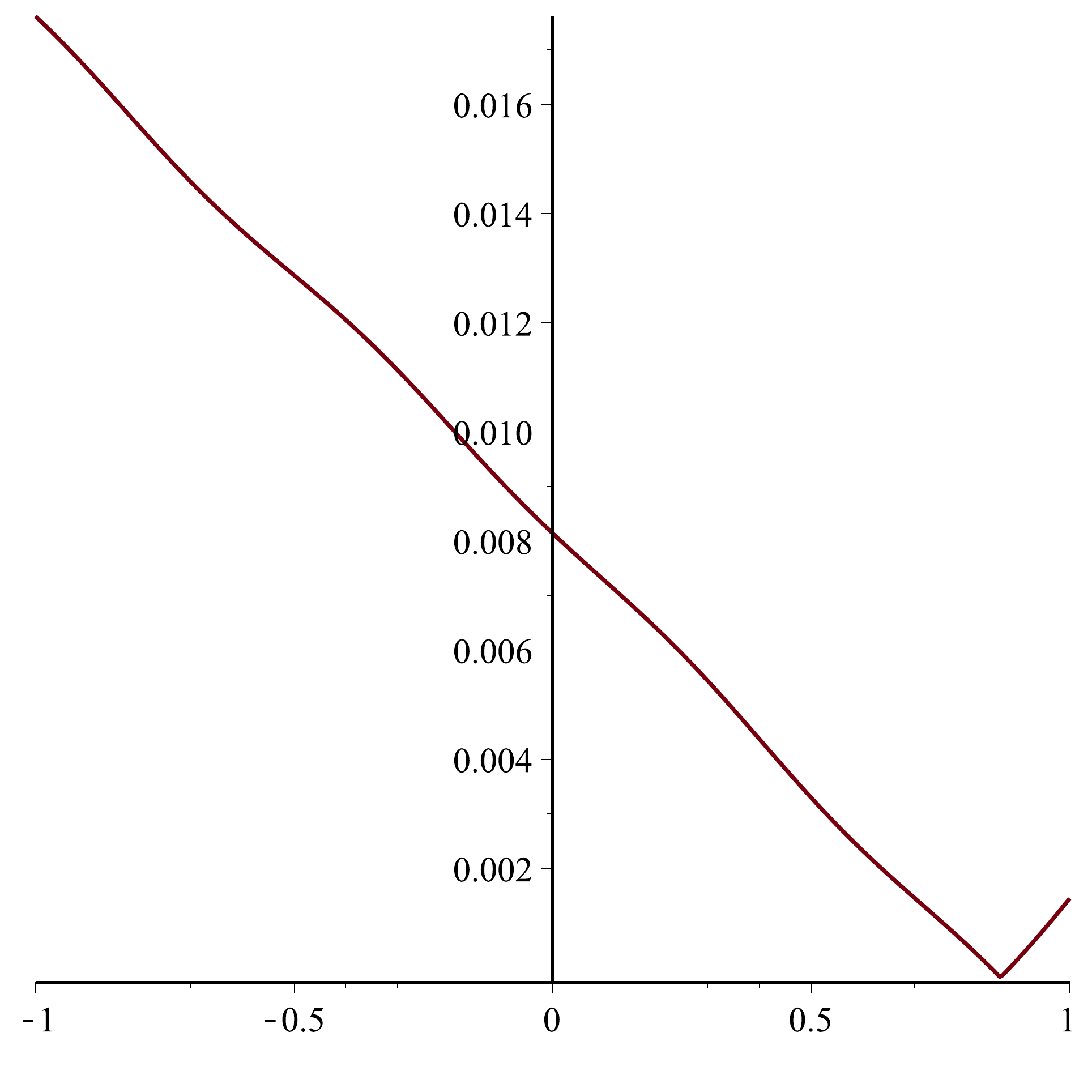}
         \end{figure}
      \end{przyklad}
      
      \begin{przyklad} \label{ex:3}
         Let $p_n$ be a sequence of \emph{Freud polynomials}, i.e. orthonormal polynomials associated with the measure
         \[
            \mu'(x) = c \ue^{-|x|^\beta}, \quad \beta \geq 1
         \]
         and $c$ is such that $\mu$ is a probability measure. Then according to \cite[Theorem 1.3]{Deift200147}, one has
         \[
            b_n \equiv 0, \quad \frac{a_n}{\widetilde{a}_n} = c' + r_n, \quad \widetilde{a}_n = (n+1)^{1/\beta}, \quad (r_n) \in \ell^1, \quad c' = \frac{1}{2} \left[ \frac{\Gamma(\beta/2) \Gamma(1/2)}{\Gamma((\beta+1)/2)} \right]^{1/\beta}.
         \]
         Observe that it implies
         \begin{equation} \label{eq:Freud1}
            \calV_1 \bigg(\frac{a_n}{\widetilde{a}_n} : n \in \NN \bigg) < \infty.
         \end{equation}
         Moreover, because $c' > 0$, it implies
         \begin{equation} \label{eq:Freud2}
            \calV_1 \bigg(\frac{\widetilde{a}_n}{a_n} : n \in \NN \bigg) < \infty.
         \end{equation}
         Observe that the sequence $\widetilde{a}$ satisfies the assumptions of Corollary~\ref{tw:asymptReg} with $N=1$. Moreover,
         \[
            \frac{a_{n+1}}{a_n} = \frac{a_{n+1}}{\widetilde{a}_{n+1}} \frac{\widetilde{a}_{n+1}}{\widetilde{a}_n} \frac{\widetilde{a}_n}{a_n}, \qquad \frac{1}{a_n} = \frac{\widetilde{a}_n}{a_n} \frac{1}{\widetilde{a}_n},
         \]
         and by \eqref{eq:Freud1} and \eqref{eq:Freud2}, each term has bounded total $1$-variation. Therefore, by \eqref{eq:29}, the sequence $a$ satisfies the assumptions of Corollary~\ref{tw:asymptReg} as well. It implies
         \[
            \lim_{n \rightarrow \infty} \widetilde{a}_n [p_{n-1}^2(x) + p_n^2(x)] = \frac{1}{\pi c c'} \ue^{|x|^\beta}
         \]
         almost uniformly on $\RR$.
         
         Notice that only for $\beta \in 2 \NN$ the density of $\mu$ is $C^\infty(\mathbb{R})$. In the other cases, it is $C^{\lceil \beta \rceil - 1}(\mathbb{R})$. It shows that similar situation could happen in Theorem~\ref{tw:przypadekRegularny} as it was the case in Example~\ref{ex:1}.
      \end{przyklad}
      
   \section*{Acknowledgements}
      I would like to thank Ryszard Szwarc for turning my attention to \cite[Theorem 7.34]{Nevai79}, and to Bartosz Trojan for his helpful suggestions concerning the presentation of this article.
   
   \bibliographystyle{abbrv} 
   \bibliography{measure}
\end{document}